\newtheorem{prop}[equation]{Proposition}
\newtheorem{thm}[equation]{Theorem}
\newtheorem{defn-thm}[equation]{Theorem-Definition}
\newtheorem{defn-lem}[equation]{Lemma-Definition}
\newtheorem{cor}[equation]{Corollary}
\newtheorem*{thm*}{Theorem}
\newtheorem*{cor*}{Corollary}
\theoremstyle{definition}
\newtheorem{example}[equation]{Example}
\newtheorem{rem}[equation]{Remark}
\newtheorem*{claim*}{Claim}
\newtheorem*{rem*}{Remark}
\newcommand{\su}{\subset}
\newcommand{\ttto}{\dashrightarrow} 
\newcommand{\tto}{\longrightarrow}
\newcommand{\mapstto}{\longmapsto}
\newcommand{\wt}{\widetilde}
\newcommand{\ov}{\overline}
\newcommand{\lc}{\lceil}
\newcommand{\rc}{\rceil}
\newcommand{\PP}{\mathbb{P}}
\newcommand{\OO}{\mathcal{O}}
\newcommand{\cc}{\mathfrak{c}}
\newcommand{\pp}{\mathfrak{p}}
\newcommand{\fq}{\mathfrak{q}}
\newcommand{\de}{\delta}
\newcommand{\De}{\Delta}
\newcommand{\ka}{\kappa}
\DeclareMathOperator{\Frac}{Frac}
\DeclareMathOperator{\Spec}{Spec}
\numberwithin{equation}{section}
\begin{document}

\title{On regular but non-smooth integral curves}

\author{Cesar Hilario}
\address{Mathematisches Institut, Heinrich-Heine-Universit\"at, 40204 D\"usseldorf, Germany}
\email{cesar.hilario.pm@gmail.com}

\author{Karl-Otto St\"ohr}
\address{IMPA, Estrada Dona Castorina 110, 22460-320 Rio de Janeiro, Brazil}
\email{stohr@impa.br}

\subjclass[2010]{14G17, 14H05, 14H45, 14D06}

\dedicatory{July 19, 2024}

\begin{abstract}
Let $C$ be a regular geometrically integral curve over an imperfect field $K$ and assume that it admits a non-smooth point $\mathfrak{p}$ which --- seen as a prime of the separable function field $K(C)|K$ --- is non-decomposed in the base field extension $\overline{K} \otimes_K K(C)|\overline{K}$.
In this paper we establish a bound for the number of iterated Frobenius pullbacks needed in order to transform $\mathfrak{p}$ into a rational point.
This provides an algorithm to compute geometric $\delta$-invariants of non-smooth points and a procedure to construct fibrations with moving singularities of  prescribed $\delta$-invariants.
We show that the bound is sharp in characteristic 2.
We further study the geometry of a pencil of plane projective rational quartics in characteristic 2 whose generic fibre attains our bound.
On our way, we prove several results on separable and non-decomposed points that might be of independent interest.
\end{abstract}

\maketitle

\setcounter{tocdepth}{1}

\section{Introduction}\label{2024_07_01_23:55}
Bertini's theorem on variable singular points, also known as the Bertini--Sard theorem, is nowadays one of the most used theorems in algebraic geometry.
In its modern version, it states that
in characteristic zero almost every fibre of a dominant morphism $\phi:T\to B$ of integral smooth algebraic varieties over an algebraically closed field $k$ is smooth.
This is no longer the case in positive characteristic, as already noted by Zariski \cite{Zar44} in the 1940s. 
The most familiar counterexamples are the quasi-elliptic fibrations that arise in the classification of algebraic surfaces by Bombieri and Mumford in characteristics $2$ and $3$ (see \cites{BM76,Lan79}).

From the point of view of Grothendieck's scheme theory, the generic fibre 
\[ C := T \times_B \Spec k(B) \] 
of the fibration $\phi: T \to B$ is a \emph{regular} scheme over the function field $K := k(B)$ of the base $B$, yet it may happen that the geometric generic fibre 
\[ C \otimes_K \overline K = C \times_{\Spec(K)} \Spec \ov K \]
is \emph{not regular}.
Such non-regularity occurs precisely when every special fibre is singular,
and so this reveals a deep connection between the failure of Bertini's theorem and the existence of regular schemes $C$ defined over an imperfect field $K$ that are \emph{non-smooth}, i.e., for which the base extension $C\otimes_K \ov K$ becomes non-regular.
Such existence represents a striking feature of geometry in positive
characteristic, 
that results from the fact that over imperfect fields the notions of smoothness and regularity differ: every smooth variety (i.e., smooth scheme of finite type over a field) is regular, but not every regular variety is smooth.\footnote{However, a rational point is smooth if and only if it is regular \cite[Corollary~2.6]{FaSc20}. For a discussion of regularity versus smoothness we refer to \cite[Chap.~11.28]{Mat80}.}
In several areas such as birational geometry, and particularly in the Minimal Model Program,
these regular but non-smooth schemes cause difficulties when one tries to apply zero characteristic techniques to positive characteristic situations;
an explicit example: del Pezzo fibrations, where the picture seems more involved in characteristic $2$ (see \cite[p.\,404]{Ma16}, \cite[Remark~1.2]{Ko91}, and \cite{FaSc20}).
As a result, much effort has been devoted to understand this behaviour and to bound its occurrence (see e.g.\,\cites{Sc08, JW21, PW22,HiSc23}).

In this paper we explore the above phenomenon in the specific
situation where the variety is a regular geometrically integral curve $C$ over an imperfect field $K$.
Note that since $C$ has dimension one, regularity is the same as normality.
If $C|K$ is the generic fibre of a fibration $f:T\to B$ then its closed points correspond bijectively to the horizontal divisors on the total space $T$; a closed point is non-smooth if and only if the corresponding divisor is a moving singularity of the fibration \cite[Section~1]{SiSt16}.

Our approach to the non-smoothnes of $C$ relies on a central tool in geometry in positive characteristic: Frobenius pullbacks.
As a non-smooth point $\pp$ of $C$ cannot be \emph{smoothed} by performing Frobenius pullbacks, because its images in the sequence of iterated Frobenius pullbacks
\[ C\to C^{(p)} \to C^{(p^2)} \to C^{(p^3)} \to \cdots \]
are non-regular \cite[Lemma~2.2]{Sal11} and therefore non-smooth, 
we consider instead the images $\pp_n \in C_n$ of $\pp$ in the sequence of regular integral curves 
\[ C_0 = C \to C_1 \to C_2 \to C_3 \to \cdots \]
obtained by passing to the normalizations $C_n$ of the Frobenius pullbacks $C^{(p^n)}$.
Our main result, stated below as Theorem~\ref{2023_05_31_19:35}, provides an explicit description of an integer $n$ for which the image point $\pp_n$ is separable (i.e., the residue field extension $\ka(\pp_n)|K$ is separable) and a fortiori smooth.
In particular, if $\pp$ is non-decomposed in the base extension $C \otimes_K \overline K$, that is, if there is only one point of $C \otimes_K \overline K$ lying over $\pp$, then the separable point $\pp_n$ is actually rational (see Corollary~\ref{2023_20_02_16:05}).

\begin{thm}[see Theorem~\ref{2022_03_04_12:45}]\label{2023_05_31_19:35}
    Let $C$ be a regular geometrically integral curve over a field $K$ of characteristic $p>0$.
    Let $\pp$ be a non-smooth point of geometric $\de$-invariant $\de(\pp)>0$.
    \begin{enumerate}[\upshape (i)]
        \item The image $\pp_n$ of $\pp$ in the normalization $C_n$ of the $n$th Frobenius pullback $C^{(p^n)}$ of $C$ is separable for all $n \geq \log_p \big( 2 \, \frac{\de(\pp)}{[\ka(\pp):K]_{sep}} + 1 \big)$; moreover, if the integer $\frac 2{p-1} \frac{\de(\pp)}{[\ka(\pp):K]_{sep}}$ is not a sum of consecutive $p$-powers then $\pp_n$ is separable for all 
        \mbox{$n \geq \log_p \big( 2 \, \frac{\de(\pp)}{[\ka(\pp):K]_{sep}} + 1 \big) - 1$}.
        \item \label{2023_05_31_21:15}
        Assume in addition that $\pp$ is non-decomposed in $C \otimes_K \overline K$. 
        Then the image $\pp_n$ is rational for all \mbox{$n \geq \log_p \big( 2  {\de(\pp)} + 1 \big)$}; 
        moreover, if the integer $\frac 2{p-1} {\de(\pp)}$ is not a sum of consecutive $p$-powers then $\pp_n$ is rational for all 
        $n \geq \log_p \big( 2{\de(\pp)} + 1 \big) - 1$.
    \end{enumerate}
\end{thm}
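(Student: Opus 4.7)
The plan is to reduce part (ii) to part (i), and to prove part (i) by iterating a quantitative decay estimate for the geometric $\de$-invariant under one Frobenius pullback.

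For the reduction, note that if $\pp$ is non-decomposed in $C \otimes_K \ov K$ then the residue field extension $\ka(\pp)|K$ is purely inseparable, so $[\ka(\pp):K]_{sep} = 1$ and the bound in (ii) agrees with that in (i). Using the material on non-decomposed points promised in the abstract, one checks that non-decomposedness is preserved along the tower $C \to C_1 \to C_2 \to \cdots$; since a point that is simultaneously separable and non-decomposed over $\ov K$ must be rational, the separability conclusion of (i) upgrades to the rationality conclusion of (ii).

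The heart of (i) is a single-step decay estimate of the shape
\[ 2\,\de(\pp_n) \;\geq\; 2p\,\de(\pp_{n+1}) + (p-1)\,[\ka(\pp_n):K]_{sep}, \]
valid whenever $\pp_n$ is non-smooth, together with the stability (or at least non-increasing behaviour) of $[\ka(\pp_n):K]_{sep}$ along the tower. This is a local computation at the geometric point of $C_n \otimes_K \ov K$ lying above $\pp_n$, essentially a Tjurina-type length estimate for the relative Frobenius. Iterating while $\pp_0, \dots, \pp_n$ are all non-smooth yields
\[ \frac{2\,\de(\pp)}{[\ka(\pp):K]_{sep}} \;\geq\; (p-1)(1 + p + \cdots + p^n) \;=\; p^{n+1} - 1, \]
and contraposing gives the first bound of (i).

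The refinement comes from analysing the equality case of the iterated estimate. Keeping $\pp_0, \dots, \pp_{n-1}$ non-smooth but allowing $\pp_n$ smooth sharpens the inequality to $\frac{2\,\de(\pp)}{[\ka(\pp):K]_{sep}} \geq p^n - 1$, and equality throughout the chain forces $\de(\pp_k) = [\ka(\pp):K]_{sep}(p^{n-k}-1)/2$ for every $k$, which is precisely the condition $\frac{2\,\de(\pp)}{(p-1)[\ka(\pp):K]_{sep}} = 1 + p + \cdots + p^{n-1}$, a sum of $n$ consecutive $p$-powers starting at $p^0$. Ruling out this arithmetic shape forces some intermediate inequality to be strict, shaving one pullback off the bound. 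The main obstacle is establishing the single-step decay with the sharp constant $(p-1)/2$ in front of the separable degree; any cruder decay rate would miss the refinement, and extracting the exact constant demands careful control of the iterated Frobenius on the completed local ring of $C_n \otimes_K \ov K$ above $\pp_n$, which is presumably where the paper's preparatory results on separable and non-decomposed points do their work.
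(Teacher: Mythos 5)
Your reduction of (ii) to (i) is fine, and the iterated inequality you extract (in the form $\frac{2\,\de(\pp)}{[\ka(\pp):K]_{sep}} \geq p^{s+1}-1$ whenever $\pp_0,\dots,\pp_s$ are all singular, which follows from $\Delta_{k+1}\leq p^{-1}\Delta_k$ together with the divisibility of $\frac{2}{p-1}\Delta_k$ by $[\ka(\pp):K]_{sep}$) is indeed true and appears in the paper's proof. The fatal gap is that this estimate, and hence your contrapositive, only shows that $\pp_n$ is \emph{non-singular}, i.e.\ $\de(\pp_n)=0$. The theorem claims $\pp_n$ is \emph{separable}, which is strictly stronger: a non-singular prime can still have a nontrivial purely inseparable residue field extension $\ka(\pp_n)|K$. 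Your argument gives no control over this residual inseparability. Already in the quasi-elliptic case ($p=2$, $\de(\pp)=1$, non-decomposed) one has $\de(\pp_1)=0$ but $\deg(\pp_1)=2$, so $\pp_1$ is non-singular yet not separable, and only $\pp_2$ is rational; in the paper's Example~\ref{2022_07_09_18:05} the discrepancy is arbitrarily large, with $\pp_1$ non-singular but $\pp_{i+1}$ still non-rational.

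The paper closes exactly this gap with two further inputs that your plan does not reconstruct: Proposition~\ref{2023_01_25_19:00}, which shows that once $\pp_s$ is non-singular one needs precisely $m=\log_p[\ka(\pp_s):K]_{insep}$ more Frobenius pullbacks to reach separability; and Proposition~\ref{2021_09_24_16:03}, which bounds $m$ by $\min\{v_p(d_1),\dots,v_p(d_s)\}$ via the divisibility of $\frac{2}{p-1}\Delta_i$ by $\deg(\pp_{i+1})$. This is the $+\min\{v_p(d_i)\}$ term in the combinatorial function $\tau_p$, which your plan omits. The omission also breaks the refinement: your equality analysis only recognizes the partition $1+p+\cdots+p^{n-1}=P^{n-1}_0$, whereas the relevant extremal partitions are $P^i_j$ for \emph{all} $j\leq i$, and the shifted ones with $j>0$ arise precisely from the $\min\{v_p(d_i)\}$ term. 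Concretely, for $\de(\pp)=p^i$ with $p=2$ one has $d=2^{i+1}=P^{i+1}_{i+1}$, which is not of the form $P^{n-1}_0$; your criterion would then apply the ``minus one'' refinement, giving bound $i+1$, but the paper's Example~\ref{2022_07_09_18:05} exhibits a prime with $\de(\pp)=2^i$ for which $\pp_{i+1}$ is not rational. So the refinement as you state it is false, not merely unproved.

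Finally, the ``Tjurina-type length estimate on the completed local ring of $C_n\otimes_K\ov K$'' you invoke is not what the paper actually does; the paper works entirely within the elementary theory of singular primes in function fields (Rosenlicht's genus drop and St\"ohr's Gorenstein property of $K^{1/p}\cdot\OO_\pp$), and no passage to formal completions or geometric points is needed or made.
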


Here $[\ka(\pp):K]_{sep}$ denotes the separable degree of the residue field extension $\ka(\pp)|K$.
Note that $\frac 2{p-1} \frac{\de(\pp)}{[\ka(\pp):K]_{sep}}$ and $\frac 2{p-1} \de(\pp)$ are indeed integers (see Corollary~\ref{2023_02_23_14:20}).

Our motivation originates from the following observation:
if an integer $n$ is known such that the image $\pp_n$ is rational, then an algorithm developed in \cite{BedSt87} by Bedoya--St\"ohr can be applied to compute the geometric 
$\de$-invariant $\de(\pp)$ of $\pp$ and several other invariants associated to $\pp$. 

To prove our results we employ methods from the theory of algebraic function fields (see \cite[II.7.4]{EGA}.
Let $F|K = K(C)|K$ be the function field of the regular integral curve $C|K$. The function fields of the iterated Frobenius pullbacks $C^{(p^n)}|K$ and of their normalizations $C_n|K$ are the iterated Frobenius pullbacks of $F|K$: 
\[ F_n|K = KF^{p^n}|K, \quad  (n=0,1,2,\dots). \]
In order to study the sequence of normalized curves $C_n$ we study the descending chain of function fields 
\[ F = F_0 \supset F_1 \supset F_2 \supset F_3 \supset \cdots. \]
A closed point of the curve $C$ and its image in $C_n$ correspond to a prime $\pp$ of $F|K$ and its restriction $\pp_n$ to $F_n|K$.

As a main application of the theorem we get a procedure to construct regular integral curves $C|K$ equipped with non-decomposed non-smooth closed points $\pp$ of a given geometric $\de$-invariant, 
or equivalently,  
a procedure to construct for each natural number $d$ the function fields $F|K$ equiped with non-decomposed singular primes $\pp$ such that $\delta(\pp)=d$.
To this end let $n=\lc \log_p(2d+1) \rc$ or $n = \lc \log_p (2d+1) \rc - 1$ be the corresponding bound in the theorem.
Each such pair $(F|K,\pp)$ can be obtained by starting with a function field $F_n|K$ equipped with a rational prime $\pp_n$, and by constructing an ascending length-$n$ chain of purely inseparable extensions of function fields
\[ F_n \su F_{n-1} \su F_{n-2} \su \cdots \su F_0 = F \]
equipped with the (uniquely determined) primes $\pp_n,\pp_{n-1},\pp_{n-2},\dots, \pp_0 = \pp$ lying over the rational prime $\pp_n$, such that $F_i|K$ is the Frobenius pullback of $F_{i-1}|K$ for each \mbox{$i=n,n-1,\dots,1$}.
The generators of the purely inseparable extensions $F_{i-1}|F_i$ of degree $p$ are obtained by applying the Riemann-Roch theorem.
With the Bedoya--St\"ohr algorithm in mind the generators have to be chosen carefully, so that the sequence of geometric $\de$-invariants $d_n = 0 \leq d_{n-1} \leq \dots \leq d_0$ ends with $d=d_0$.
Looking for decomposed non-smooth points we have to start our procedure with separable but non-rational primes. 
Our method is illustrated in
\cite{HiSt23} 
and \cite{HiSt24},
where curves of arithmetic genus $g=3$ equipped with non-smooth points of geometric $\de$-invariant $d=3$ are constructed.

We show that the bound in the theorem is sharp in characteristic $p=2$.
Furthermore,
on our way we obtain several results on separable and non-decomposed closed points
that might be of independent interest
(see Proposition~\ref{2023_02_06_13:00} and Remark~\ref{2023_05_31_20:55}).

In the last section of the paper we study  a pencil of singular rational quartics in characteristic~$p=2$,
whose generic fibre $C|K$ attains the sharp bound 
for $\de(\pp)=3$.
We discuss the geometry of the fibration in detail, and we further find its minimal regular model, which by a theorem of Lichtenbaum and Shafarevich is uniquely determined by the function field of $C|K$.

\subsection*{Acknowledgements} We thank Stefan Schr\"oer for several suggestions and feedback on an earlier draft of this manuscript.
This project was started while the first author was a PhD student at IMPA, funded by CAPES Brazil, and was completed while he was a member of the research training group \textit{GRK 2240: Algebro-Geometric Methods in Algebra, Arithmetic and Geometry}, which is funded by the Deutsche Forschungsgemeinschaft.
Parts of it were also written during a research visit he made to IMPA in March 2023.

\section{Non-smooth points of regular integral curves} \label{2022_04_24_16:10}
There is a one-to-one correspondence between the regular proper integral curves over (the spectrum of) a given field $K$ and the one-dimensional function fields with base field $K$ (see \cite[II.7.4]{EGA}).
The function field $F|K$ corresponding to such a curve $C|K$ is the local ring at the generic point. 
Conversely, the points of the curve $C$ different from the generic point, i.e., the closed points of $C$, are the primes $\pp$ of the function field $F|K$, and their local rings $\OO_{\pp}$ are just the (discrete) valuation rings of $F|K$. If $U$ is a non-empty open subset of $C$, that is, the complement of a finite set of closed points, then the space $\Gamma(U,\OO_C)$ of local sections of the structure sheaf $\OO_C$ is the intersection of the local rings $\OO_{\pp}$ of the closed points $\pp \in U$.

In this section we assume that $F|K$ is a one-dimensional separable function field 
in positive characteristic $p$. 
This means that $F|K$ is a separable finitely generated field extension of transcendence degree $1$, such that $K$ is algebraically closed in $F$.
The latter assumption together with the separability of $F|K$ mean that the corresponding regular proper integral curve $C|K$ is geometrically integral, i.e., it remains integral under algebraic extensions of the base field. 

Let $\pp$ be a prime of $F|K$ and consider its (discrete) valuation ring $\OO_\pp$.
If $K'$ is an algebraic extension of the base field $K$, then the tensor product $K' {\cdot} \OO_\pp := K'\otimes_K \OO_\pp$ is a semilocal domain with fraction field $\Frac(K' {\cdot} \OO_\pp) =  K' {\cdot} F:= K'\otimes_K F$ that coincides with the finite intersection of its localizations (i.e., the localizations at its maximal ideals).
Base extensions of local rings are therefore semilocal.
For this reason, in order to study the behaviour of  $\pp$ under base field extensions it is convenient to work with the semilocal domains of $F|K$ rather than with its local rings.

Let $\OO$ be a semilocal domain of $F|K$ (where $K \subset \Frac(\OO) = F$), which is equal to the finite intersection of its localizations, say
\[ \OO = \OO_1\cap \dots \cap \OO_r \, . \]
Let $K'$ be an algebraic extension of the base field $K$.
The \emph{$K'$-singularity degree} of $\OO$, which is defined as the $K'$-codimension of the extended semilocal ring $K'{\cdot} \OO = K' \otimes _K \OO$ in its integral closure $\wt{K'{\cdot} \OO}$, is finite (see \cite[Theorem~1]{Ros52}) and equal to  the sum of the $K'$-singularity degrees of the localizations, i.e.,
\begin{equation}\label{2023_02_22_20:25}
   \dim_{K'} \wt{K' {\cdot} \OO}/K' {\cdot}\OO = \sum_{i=1}^r \dim_{K'} \wt{K'{\cdot}\OO_i}/K'{\cdot}\OO_i
\end{equation}
(see \cite[p.\,172]{Ros52}); indeed, the canonical homomorphism $\wt{K' {\cdot}\OO} \to \bigoplus_{i=1}^r \wt{K'{\cdot}\OO_i} / K'{\cdot}\OO_i$ has kernel $K'{\cdot}\OO$ and is surjective, as follows by applying the Chinese remainder theorem to the conductor ideals of the rings $K'{\cdot}\OO_i$.

If $K''$ is an algebraic extension field of $K'$ then the $K''$-singularity degree of $\OO$ is equal to the sum of the $K'$-singularity degree of $\OO$ and the $K''$-singularity degree of $\wt{K'{\cdot}\OO}$, as can be seen from
\begin{align}
    \dim_{K'} \wt{K'{\cdot}\OO}/K'{\cdot}\OO &= \dim_{K''} (K''\otimes_{K'} \wt{K'{\cdot} \OO}) / (K'' \otimes_{K'} K'{\cdot}\OO) \notag \\
    &= \dim_{K''} K'' {\cdot} \wt{K'{\cdot}\OO} / K'' {\cdot}\OO \notag \\
    &= \dim_{K''} \wt{K''{\cdot}\OO} / K''{\cdot}\OO - \dim_{K''} (\wt{K''{\cdot}\OO} / K''{\cdot} \wt{K'{\cdot}\OO}). \label{2023_02_22_20:30}
\end{align}
The \textit{geometric singularity degree}%
\footnote{Another name in the literature is ``geometric $\delta$-invariant''.}
of a prime $\pp$, denoted $\de(\pp)$, is defined as the $\ov K$-singularity degree of its local ring $\OO_\pp$, where $\ov K$ is the algebraic closure of $K$. 
The prime $\pp$ is called \textit{singular} if $\de(\pp)>0$, i.e., $\ov K {\cdot} \OO_\pp \subsetneqq \wt{\ov K {\cdot} \OO_\pp}$, i.e., 
$\pp$ is a non-smooth point of the corresponding regular integral curve $C|K$.%
\footnote{It might be tempting to use the term ``non-smooth prime'', but the term ``singular prime'' is already in use in the literature on function fields.}

By Rosenlicht's genus drop formula (see \cite[Theorem~11]{Ros52}) the genus of the extended function field $\ov K F|\ov K$ is equal to 
\begin{equation}\label{2022_03_04_18:10}
    \ov g = g - \sum_\pp \de(\pp)
\end{equation}
where $g$ is the genus of the function field $F|K$ and the sum is taken over the singular primes $\pp$ of $F|K$.
The genus drop $g - \ov g$ is divisible by $(p-1)/2$ if $p>2$, by a theorem of Tate \cite{Tate52} (see also \cite{Sc09} for a modern treatment).
It follows that the function field $F|K$ is \textit{conservative} (i.e., $\ov g = g$) if and only if it does not admit singular primes,
or equivalently, if and only if the corresponding regular integral curve $C|K$ is smooth.

For every non-negative integer $n$ we consider the $n$th \emph{Frobenius pullback}
$F_n|K := F^{p^n}{\cdot} K|K$.
This function field is uniquely determined by the property that the extension $F|F_n$ is purely inseparable of degree $p^n$ (see \cite[p.\,33]{Sti78}). 
Let $\pp_n$ be the restriction of the prime $\pp$ to $F_n$, and let $\pp^{(n)}$ be the only extension of $\pp$ to the purely inseparable base field extension $F^{(n)} := K^{p^{-n}} \! {\cdot} F$.
The valuation ring $\OO_{\pp^{(n)}}$ of $\pp^{(n)}$ is the integral closure $\wt{K^{p^{-n}} \! {\cdot} \OO_\pp}$ of the domain $K^{p^{-n}} \! {\cdot} \OO_\pp$ in its field of fractions $K^{p^{-n}} \! {\cdot} F$.
The $n$th Frobenius map $z \mapsto z^{p^n}$ defines an isomorphism between the function fields $F^{(n)}| K^{p^{-n}}$ and $F_n|K$ which maps $\pp^{(n)}$ to $\pp_n$.
As the ramification index of $\pp^{(n)}|\pp_n$ is equal to $p^n$ we get $e(\pp^{(n)}|\pp) \cdot e (\pp|\pp_n) = p^n$ and therefore
\[ e(\pp^{(n+1)}|\pp^{(n)}) \cdot e(\pp_n | \pp_{n+1}) = p \; \text{ for each $n$.} \]
As the field extensions $F_n|F_{n+1}$ are purely inseparable of degree $p$, each residue field extension $\ka(\pp_{n})|\ka(\pp_{n+1})$ is purely inseparable of degree $[\ka(\pp_n):\ka(\pp_{n+1})] \in \{1,p\}$.

In this section we ask for an integer $n$ such that the restricted prime $\pp_n$ is rational, i.e., such that its degree $\deg(\pp_n)=[\ka(\pp_n):K]$ is equal to $1$.
If such an integer is known, then the algorithm developed in \cite{BedSt87} can be applied to compute several local invariants of $F|K$
such as the singularity degrees of $\pp$ or the orders of differentials at $\pp$.
For each non-negative integer $n$ we denote by
\[ \De_n = \De(\pp_n) := \dim_{K^{1/p}} (\wt{K^{1/p} {\cdot} \OO_{\pp_n}} / K^{1/p} {\cdot} \OO_{\pp_n} ) \]
the $K^{1/p}$-singularity degree of $\pp_n$.

\begin{prop}\label{2024_07_14_19:10}
With the above notation, the non-negative integers $\De_0,\De_1,\De_2,\dots$ are divisible by $(p-1)/2$ if $p>2$. 
Moreover 
\[ \De_{n+1} \leq p^{-1} \De_n \; \text{ for each $n$}. \]
\end{prop}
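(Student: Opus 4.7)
The plan is to prove the inequality $\De_{n+1}\le p^{-1}\De_n$ by exploiting the canonical Frobenius isomorphism of $\F_p$-algebras
\[
\phi\colon K^{1/p}F_n \xrightarrow{\;\sim\;} F_{n+1},\qquad z\longmapsto z^p,
\]
which is well-defined because $(K^{1/p}F_n)^p = K\cdot F^{p^{n+1}}=F_{n+1}$, and which restricts to the Frobenius $K^{1/p}\xrightarrow{\sim}K$ (so $\phi$ is semilinear, not $K$-linear). First I would observe that $\phi$ carries $K^{1/p}\OO_{\pp_n}\subseteq\wt{K^{1/p}\OO_{\pp_n}}$ onto $K\cdot\OO_{\pp_n}^{\,p}\subseteq\wt{K^{1/p}\OO_{\pp_n}}^{\,p}$ inside $F_{n+1}$, and that both of these subrings lie in $\OO_{\pp_{n+1}}$: the first because $\OO_{\pp_n}^{\,p}\subseteq\OO_{\pp_n}\cap F_{n+1}=\OO_{\pp_{n+1}}$, and the second because the $p$-th power of any element integral over $K^{1/p}\OO_{\pp_n}$ satisfies an integral relation over $(K^{1/p}\OO_{\pp_n})^p=K\cdot\OO_{\pp_n}^{\,p}\subseteq\OO_{\pp_{n+1}}$, combined with the normality of $\OO_{\pp_{n+1}}$ in $F_{n+1}$.

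Since $\phi$ is semilinear, it identifies $\De_n$ with $\dim_K(\wt{K^{1/p}\OO_{\pp_n}}^{\,p}/K\cdot\OO_{\pp_n}^{\,p})$; extending scalars along the flat inclusion $K\hookrightarrow K^{1/p}$ then yields
\[
\dim_{K^{1/p}}\bigl(K^{1/p}\cdot\wt{K^{1/p}\OO_{\pp_n}}^{\,p}\,\big/\,K^{1/p}\cdot\OO_{\pp_n}^{\,p}\bigr)=\De_n
\]
inside $K^{1/p}F_{n+1}$. Abbreviating $A:=K^{1/p}\OO_{\pp_n}^{\,p}$, $B:=K^{1/p}\OO_{\pp_{n+1}}$, $C:=K^{1/p}\cdot\wt{K^{1/p}\OO_{\pp_n}}^{\,p}$ and $T:=\wt{K^{1/p}\OO_{\pp_{n+1}}}$, we have two chains $A\subseteq B\subseteq T$ and $A\subseteq C\subseteq T$ with $\dim_{K^{1/p}}(T/B)=\De_{n+1}$ and $\dim_{K^{1/p}}(C/A)=\De_n$. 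Comparing the two decompositions of $\dim_{K^{1/p}}(T/A)$ gives the identity
\[
\De_n-\De_{n+1}=\dim_{K^{1/p}}(B/A)-\dim_{K^{1/p}}(T/C),
\]
so the target bound $p\De_{n+1}\le\De_n$ is equivalent to the estimate $\dim_{K^{1/p}}(B/A)-\dim_{K^{1/p}}(T/C)\ge(p-1)\De_{n+1}$. I expect establishing this lower bound to be the main obstacle; it should follow from a local analysis at $\pp_{n+1}$ using the two possible shapes of the purely inseparable degree-$p$ extension $F_n|F_{n+1}$ (ramification index $p$ with trivial residue extension, or trivial ramification with residue degree $p$), combined with an explicit description of the integral closure of $A$ in $K^{1/p}F_{n+1}$.

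For the divisibility assertion in characteristic $p>2$, I would apply Tate's theorem on the genus drop under purely inseparable base change, combined with Rosenlicht's formula \eqref{2022_03_04_18:10}, to the function field $F_n|K$ and its base extension $K^{1/p}F_n|K^{1/p}$: this gives divisibility by $(p-1)/2$ of the global sum $\sum_{\fq}\De(\fq)$. To extract the divisibility of the single contribution $\De_n=\De(\pp_n)$, I would invoke the local version of Tate's theorem (a standard consequence via localization, since only finitely many primes of $F_n|K$ have positive $K^{1/p}$-singularity degree).
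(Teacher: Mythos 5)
The paper's own proof is merely a citation to \cite[Corollary~2.4 and Proposition~3.5]{St88}, so I am judging your proposal on its own terms; it contains two genuine gaps.

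For the inequality $\De_{n+1}\le p^{-1}\De_n$ your reduction is correct but vacuous. In your first paragraph you already establish that $\wt{K^{1/p}\OO_{\pp_n}}^{\,p}\subseteq\OO_{\pp_{n+1}}$, hence $C\subseteq B$; your two chains therefore collapse into the single chain $A\subseteq C\subseteq B\subseteq T$. Then $\dim(T/C)=\dim(T/B)+\dim(B/C)=\De_{n+1}+\dim(B/C)$ and $\dim(B/A)=\dim(C/A)+\dim(B/C)=\De_n+\dim(B/C)$, so your target estimate $\dim(B/A)-\dim(T/C)\ge(p-1)\De_{n+1}$ reduces, after cancelling $\dim(B/C)$, to exactly $\De_n\ge p\De_{n+1}$. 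The bound you defer to a ``local analysis'' is thus the original statement rewritten unchanged; the Frobenius-semilinearity and scalar-extension bookkeeping has not advanced the problem at all. The actual content of \cite[Proposition~3.5]{St88} is a conductor/different computation for the ring $K^{1/p}\OO_{\pp_n}$ sitting inside the discrete valuation ring $\wt{K^{1/p}\OO_{\pp_n}}$, and your sketch does not supply it.

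For the divisibility by $(p-1)/2$, combining Tate's theorem with Rosenlicht's formula gives $\sum_{\fq}\De(\fq)\equiv 0\pmod{(p-1)/2}$, a \emph{global} statement. Your step from there to the local statement --- ``a standard consequence via localization, since only finitely many primes have positive $K^{1/p}$-singularity degree'' --- is not a valid inference: a finite sum of integers can be divisible by $(p-1)/2$ without any individual summand being so, and finiteness of the support is irrelevant. The local divisibility is true but has to be \emph{proved} locally; in \cite{St88} it is Corollary~2.4, obtained from the fact that $K^{1/p}\OO_{\pp_n}$ is Gorenstein, so that $2\De_n=\dim_{K^{1/p}}\bigl(\wt{K^{1/p}\OO_{\pp_n}}/\cc\bigr)$ for the conductor ideal $\cc$, together with an argument showing that this dimension is a multiple of $(p-1)\deg\pp_{n+1}$. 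Without a local argument of this kind, the divisibility at a single prime remains unjustified.
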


\begin{proof}
See \cite[Corollary~2.4 and Proposition~3.5]{St88}.
\end{proof}
In particular $\De_n = 0$ for $n$ sufficiently large, or more precisely
\begin{equation*}
	\De_n = 0  \; \text{ whenever } \; p^n > 
\begin{cases}
    \De_0 & \text{if $p=2$ or $3$}, \\
    \frac{2}{p-1} \De_0 & \text{if $p>2$.}
\end{cases}
\end{equation*}

\begin{prop}\label{2022_07_17_12:05}
For every prime $\pp$ of $F|K$ the following equality holds
\[ \de(\pp) = \De_0 + \De_1 + \De_2 + \cdots.\]
In particular, the geometric singularity degree $\de(\pp)$ is a multiple of $(p-1)/2$.
\end{prop}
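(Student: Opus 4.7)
My plan is to iterate the additivity formula \eqref{2023_02_22_20:30} along the tower of purely inseparable base field extensions $K \su K^{1/p} \su K^{1/p^2} \su \dots$ applied to $\OO_\pp$, and then pass from the perfect closure to $\ov K$. Set $L_n := K^{p^{-n}}$ and $\sig_n := \dim_{L_n} \wt{L_n {\cdot} \OO_\pp} / L_n {\cdot} \OO_\pp$. Using $\wt{L_n {\cdot} \OO_\pp} = \OO_{\pp^{(n)}}$ recalled in the excerpt, formula \eqref{2023_02_22_20:30} with $K' = L_n$ and $K'' = L_{n+1}$ reads
\[ \sig_{n+1} - \sig_n \; = \; \dim_{L_{n+1}} \OO_{\pp^{(n+1)}} / L_{n+1} {\cdot} \OO_{\pp^{(n)}}, \]
which is the $L_{n+1}$-singularity degree of $\OO_{\pp^{(n)}}$ regarded as a semilocal domain of $F^{(n)}|L_n$. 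The Frobenius isomorphism $z \mapsto z^{p^n}$ from $F^{(n)}|L_n$ onto $F_n|K$, which carries $L_{n+1}$ to $K^{1/p}$ and $\OO_{\pp^{(n)}}$ to $\OO_{\pp_n}$, identifies this number with $\De_n$, so telescoping yields $\sig_n = \De_0 + \De_1 + \dots + \De_{n-1}$ for every $n \geq 0$.

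By Proposition~\ref{2024_07_14_19:10} we have $\De_m = 0$ for all $m \geq N$ for some sufficiently large $N$, so the right-hand side of the asserted identity is in fact a finite sum. It remains to show that for $n \geq N$ the partial sum $\sig_n$ already coincides with $\de(\pp)$. Applying \eqref{2023_02_22_20:30} once more with $K' = L_n$ and $K'' = \ov K$ gives
\[ \de(\pp) - \sig_n \; = \; \dim_{\ov K} \wt{\ov K {\cdot} \OO_\pp} / \ov K {\cdot} \OO_{\pp^{(n)}}, \]
so the task reduces to the claim that $\ov K {\cdot} \OO_{\pp^{(n)}}$ is itself integrally closed in $\ov K {\cdot} F$. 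This I regard as the main obstacle.

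I would attack it in two stages. First, the vanishings $\De_m = 0$ for $m \geq n$, transported through the Frobenius isomorphism, inductively give $L_m {\cdot} \OO_{\pp^{(n)}} = \OO_{\pp^{(m)}}$ for every $m \geq n$; a direct limit argument (every integrality relation over $L_\infty {\cdot} \OO_{\pp^{(n)}}$ takes place inside some $L_m {\cdot} F$) then shows that $L_\infty {\cdot} \OO_{\pp^{(n)}}$ is integrally closed in $L_\infty {\cdot} F$, where $L_\infty = \bigcup_m L_m$ is the perfect closure of $K$. Second, since the remaining extension $\ov K | L_\infty$ is separable algebraic and the geometric integrality of $C|K$ ensures that $\ov K \otimes_{L_\infty} (L_\infty {\cdot} F) = \ov K {\cdot} F$ is a field, normality of the one-dimensional semilocal Dedekind ring $L_\infty {\cdot} \OO_{\pp^{(n)}}$ is preserved under the base change to $\ov K$, yielding $\ov K {\cdot} \OO_{\pp^{(n)}} = \wt{\ov K {\cdot} \OO_\pp}$ as required. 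Finally, the divisibility assertion for $p > 2$ is immediate from the series expansion $\de(\pp) = \sum_{n} \De_n$ together with the divisibility of each $\De_n$ by $(p-1)/2$ established in Proposition~\ref{2024_07_14_19:10}.
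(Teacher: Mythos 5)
Your proposal is correct and follows essentially the same route as the paper: telescope the singularity degree along the purely inseparable tower $K \su K^{1/p} \su K^{1/p^2} \su \cdots$ using the Frobenius identification $\De_n = $ ($L_{n+1}$-singularity degree of $\pp^{(n)}$), observe the sum stabilizes, and then argue that the remaining separable algebraic extension $\ov K\,|\,K^{p^{-\infty}}$ contributes nothing. Where you verify normality of $\ov K {\cdot} \OO_{\pp^{(n)}}$ directly via base change along a separable extension, the paper achieves the same by citing the non-singularity of $\pp^{(\infty)}$ and deferring to the trace-form argument in the proof of Proposition~\ref{2023_02_06_13:00}; both routes are sound.
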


\begin{proof}
    By using the $n$-th Frobenius map we see that $\De_n$ is equal to the $K^{p^{-(n+1)}}$-singularity degree of $\pp^{(n)}$.
    Hence by considering the chain of local rings
    \[ \OO_\pp = \OO_{\pp^{(0)}} \su \OO_{\pp^{(1)}} \su \OO_{\pp^{(2)}} \su \cdots \su \OO_{\pp^{(n)}} \]
    we deduce that the $K^{p^{-(n+1)}}$-singularity degree of $\pp$ is equal to the sum $\De_0 + \De_1 + \dots + \De_n$.
    As $K^{p^{-\infty}} := \bigcup K^{p^{-n}}$ and therefore
    \[ K^{p^{-n}} \! {\cdot} F = \bigcup_{n = 0}^\infty K^{p^{-n}} \! {\cdot} F = \varinjlim K^{p^{-n}} \! {\cdot} F, \] 
    we conclude that the $K^{p^{-\infty}}$-singularity degree of $\pp$ is equal to $\De_0 + \De_1 + \cdots$.
    
    Let $\pp^{(\infty)}$ be the only extension of $\pp$ to the purely inseparable base field extension $K^{p^{-\infty}} \! {\cdot} F$.
    As the algebraic closure $\ov K$ is separable over $K^{p^{-\infty}}$, the prime $\pp^{(\infty)}$ is non-singular and so the geometric singularity degree $\de(\pp)$ of the prime $\pp$ coincides with its $K^{p^{-\infty}}$-singularity degree
    (see also the proof of Proposition~\ref{2023_02_06_13:00} below).
\end{proof}

By applying the proposition to the restricted prime $\pp_n$ for each non-negative integer $n$ we obtain

\begin{cor} \label{2022_07_17_12:55} $ $
For each prime $\pp$ the following assertions hold
    \begin{enumerate}[\upshape (i)]
        \item $\de(\pp_n) = \De_n + \De_{n+1} + \cdots$;
        \item $\De_n = \de(\pp_n) - \de(\pp_{n+1})$;
        \item the prime $\pp_n$ is non-singular if and only if $\De_n = 0$;
        \item if $\De_n = 0$ and $n>0$, then $\de(\pp_{n-1}) = \De(\pp_{n-1})$;
        \item the prime $\pp_n$ is non-singular whenever $p^n>\min\{ \De_0, \frac 2{p-1} \De_0 \}$.
    \end{enumerate}
\end{cor}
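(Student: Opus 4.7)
The plan is to apply Proposition~\ref{2022_07_17_12:05} not to $\pp$ itself but to the restricted prime $\pp_n$, viewed as a prime of the one-dimensional separable function field $F_n|K$. The crucial compatibility to verify is that the tower of iterated Frobenius pullbacks of $F_n|K$ is the tail $F_{n+1}|K,\, F_{n+2}|K,\dots$ of the tower already built from $F|K$, and that the associated restricted primes are $\pp_{n+1},\pp_{n+2},\dots$. This is immediate from the definition $F_n = KF^{p^n}$, since $KF_n^{p^m} = K(KF^{p^n})^{p^m} = KF^{p^{n+m}} = F_{n+m}$, and the restriction operation on primes is transitive. Consequently the invariant that Proposition~\ref{2022_07_17_12:05} attaches to $\pp_{n+m}$ relative to the subtower starting at $F_n$ is precisely $\De_{n+m}$.

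Granting this identification, assertion (i) is simply the statement of Proposition~\ref{2022_07_17_12:05} applied to $\pp_n$. Assertion (ii) then follows by telescoping: subtracting the formula in (i) for $\pp_{n+1}$ from the formula for $\pp_n$ leaves exactly $\De_n$. For (iii), I would combine (i) with Proposition~\ref{2024_07_14_19:10}: the inequality $\De_{m+1}\le p^{-1}\De_m$ propagates the vanishing of $\De_n$ to all $\De_m$ with $m\geq n$, so $\De_n=0$ forces $\de(\pp_n)=\sum_{m\geq n}\De_m=0$, while the converse is immediate from non-negativity of the $\De_m$. Assertion (iv) then follows by the same propagation, since if $\De_n=0$ with $n>0$, the sum in (i) for $\pp_{n-1}$ collapses to its first term $\De_{n-1}=\De(\pp_{n-1})$.

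For (v), iterating $\De_{m+1}\le p^{-1}\De_m$ gives $\De_n\le p^{-n}\De_0$. If $p=2$ or $3$ then $\De_n$ is a non-negative integer, so $p^n>\De_0$ forces $\De_n=0$. If $p>3$ then by Proposition~\ref{2024_07_14_19:10} the integer $\De_n$ is divisible by $(p-1)/2$, hence is either $0$ or at least $(p-1)/2$; the hypothesis $p^n>\tfrac{2}{p-1}\De_0$ rules out the second alternative, so again $\De_n=0$. In both regimes, (iii) yields the non-singularity of $\pp_n$, matching the stated threshold $p^n>\min\{\De_0,\tfrac{2}{p-1}\De_0\}$.

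I do not anticipate a genuine obstacle: the corollary is essentially bookkeeping once Propositions~\ref{2024_07_14_19:10} and~\ref{2022_07_17_12:05} are in hand. The only conceptual point deserving care is the \emph{shift invariance} of the Frobenius-pullback construction used in the first paragraph — making sure that the $K^{1/p}$-singularity degree of $\pp_n$ computed relative to $F_n|K$ really coincides with $\De_n$ as originally defined. Once that is recorded, the five assertions unwind in the order (i) $\Rightarrow$ (ii), then (iii) via non-negativity, then (iv) via the same propagation, then (v) via the divisibility statement.
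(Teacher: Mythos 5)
Your proof is correct and takes essentially the same approach as the paper: the paper simply observes that the corollary follows by applying Proposition~\ref{2022_07_17_12:05} to the restricted prime $\pp_n$, and your write-up fills in exactly the bookkeeping (shift-invariance of the Frobenius tower, telescoping for (ii), propagation of vanishing via Proposition~\ref{2024_07_14_19:10} for (iii)--(v)) that the paper leaves implicit.
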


Using the genus drop formula~\eqref{2022_03_04_18:10} we get 

\begin{cor}
Let $g_n$ be the genus of the $n$th Frobenius pullback $F_n|K$ of $F|K$. Then
\begin{enumerate}[\upshape (i)]
    \item $g_n - g_{n+1} = \sum_\pp \De_n(\pp)$ for each $n\geq 0$;
    \item $g - \ov g = \sum_{n\geq0} (g_n - g_{n+1})$;
    \item the differences $g_n - g_{n+1}$ are multiples of $(p-1)/2$ and they satisfy $g_{n+1} - g_{n+2} \leq p^{-1} (g_n - g_{n+1})$;
    \item $g_n - g_{n+1} = 0$ if and only if the function field $F_n|K$ is conservative, i.e., $g_n= \ov g$;
    \item the function field $F_n|K$ is conservative whenever $p^n > \min\{ g-g_1, \frac{2}{p-1}(g - g_1) \}$.
\end{enumerate}
\end{cor}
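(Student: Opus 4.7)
The overall plan is to reduce each item to the corresponding prime-by-prime statement established in Propositions~\ref{2024_07_14_19:10} and~\ref{2022_07_17_12:05} and in Corollary~\ref{2022_07_17_12:55}, the key being a single-step genus drop formula that isolates the difference $g_n - g_{n+1}$. For (i), I would apply Rosenlicht's formula (\ref{2022_03_04_18:10}) not to the extension $K \subset \ov K$ but to the function field $F_n|K$ under the purely inseparable base change $K \subset K^{1/p}$, obtaining
\[ g_n - g\bigl( K^{1/p}{\cdot}F_n \,|\, K^{1/p} \bigr) = \sum_\pp \De_n(\pp), \]
where the sum runs over primes $\pp$ of $F|K$, bijectively identified with primes of $F_n|K$ via restriction (since $F|F_n$ is purely inseparable). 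The crucial identification is that the Frobenius $z \mapsto z^p$ defines an isomorphism of function fields $K^{1/p}{\cdot}F_n \isom F_{n+1}$ carrying $K^{1/p}$ onto $K$; since the genus is preserved by such an isomorphism, $g(K^{1/p}{\cdot}F_n \,|\, K^{1/p}) = g_{n+1}$, and (i) follows.

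Part (ii) then combines (i) with Proposition~\ref{2022_07_17_12:05} by swapping the order of summation:
\[ \sum_{n\geq 0}(g_n - g_{n+1}) = \sum_\pp \sum_{n\geq 0} \De_n(\pp) = \sum_\pp \de(\pp) = g - \ov g, \]
using the original genus drop (\ref{2022_03_04_18:10}) in the last step; all sums are finite by Corollary~\ref{2022_07_17_12:55}(v). Part (iii) is immediate from Proposition~\ref{2024_07_14_19:10} applied prime by prime: divisibility of each $\De_n(\pp)$ by $(p-1)/2$ propagates to the sum $g_n - g_{n+1} = \sum_\pp \De_n(\pp)$, and summing the inequality $\De_{n+1}(\pp) \leq p^{-1}\De_n(\pp)$ over all $\pp$ yields the desired bound on $g_{n+1} - g_{n+2}$.

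For (iv), I would observe that $F_n|K$ is conservative if and only if every prime $\pp_n$ is non-singular; by Corollary~\ref{2022_07_17_12:55}(iii) this is equivalent to $\De_n(\pp) = 0$ for all $\pp$, which by non-negativity amounts to $\sum_\pp \De_n(\pp) = 0$, i.e.\ $g_n - g_{n+1} = 0$ by (i). Finally, for (v), Corollary~\ref{2022_07_17_12:55}(v) ensures that each individual $\pp_n$ is non-singular whenever $p^n > \min\{\De_0(\pp), \tfrac{2}{p-1}\De_0(\pp)\}$; since $\sum_\pp \De_0(\pp) = g - g_1$ by (i) with $n=0$, the hypothesis $p^n > \min\{g - g_1,\tfrac{2}{p-1}(g - g_1)\}$ majorizes every prime-by-prime bound simultaneously, so $\De_n(\pp) = 0$ for every $\pp$, and conservativity follows via (iv). The only subtle point I anticipate lies in step (i): one must confirm that the Frobenius isomorphism $K^{1/p}{\cdot}F_n \isom F_{n+1}$, though not defined over a common base field, transports the genus correctly---this is routine, as the genus is an invariant of the abstract function field once the field of constants is fixed.
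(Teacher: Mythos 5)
Your proof is correct. Let me note one point about part (i) where your route differs slightly from the most economical one given the paper's stated results. The paper only records Rosenlicht's genus drop formula~\eqref{2022_03_04_18:10} for the extension $\ov K | K$. You invoke a one-step version for $K \subset K^{1/p}$, applied to $F_n|K$, together with the Frobenius identification $K^{1/p}{\cdot}F_n \isom F_{n+1}$. This is legitimate---Rosenlicht's Theorem~11 covers arbitrary algebraic constant field extensions, and the Frobenius transport of genus is, as you say, routine---but it does go slightly beyond the literal statement of~\eqref{2022_03_04_18:10}. A proof that stays strictly within the paper's stated toolkit is to apply~\eqref{2022_03_04_18:10} twice, to $F_n|K$ and to $F_{n+1}|K$ (noting that both have the same geometric genus $\ov g$ since $\ov K F_n = \ov K F_{n+1}$), and subtract:
\[ g_n - g_{n+1} = \sum_\pp \de(\pp_n) - \sum_\pp \de(\pp_{n+1}) = \sum_\pp \De_n(\pp) \]
by Corollary~\ref{2022_07_17_12:55}(ii). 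The two arguments are of course equivalent in content; yours is perhaps more conceptual (it realizes $\De_n$ directly as a genus drop), while the subtraction argument is more parsimonious in its references. Parts (ii)--(v) of your proof are exactly as expected and correct, though for (ii) the telescoping observation $\sum_{n\geq0}(g_n - g_{n+1}) = g_0 - \lim g_n = g - \ov g$ (valid since $g_n$ stabilizes at $\ov g$) would avoid the interchange of summation. One minor imprecision: to justify finiteness in (ii) you also need that only finitely many primes are singular, which comes from the finiteness of the genus drop in~\eqref{2022_03_04_18:10}, not just from Corollary~\ref{2022_07_17_12:55}(v).
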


\begin{prop}\label{2021_09_24_16:03}
    Let $\pp$ be a singular prime of 
    $F|K$.
    Then the degree of $\pp_1$ is a divisor of the integer $\frac 2 {p-1} \De(\pp)$.
\end{prop}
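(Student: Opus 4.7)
The plan is to translate the statement via the $p$-th power Frobenius isomorphism $F^{(1)}|K^{1/p}\xrightarrow{\sim} F_1|K$, under which the unique extension $\pp^{(1)}$ of $\pp$ to $F^{(1)}$ corresponds to $\pp_1$. In particular $\deg(\pp_1)=[\ka(\pp^{(1)}):K^{1/p}]$. Writing $R:=K^{1/p}{\cdot}\OO_\pp$ and $\wt R:=\OO_{\pp^{(1)}}$, the very definition of the $K^{1/p}$-singularity degree reads $\De(\pp)=\dim_{K^{1/p}}(\wt R/R)$, and the claim becomes the assertion that $[\ka(\pp^{(1)}):K^{1/p}]$ divides $\tfrac{2}{p-1}\dim_{K^{1/p}}(\wt R/R)$.

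Next, I would filter $\wt R/R$ by the subspaces $W_n:=(R+\mm_{\wt R}^n)/R$ for $n\geq 0$; since the conductor of $\wt R$ in $R$ is nonzero, $W_n=0$ once $n$ exceeds its $\mm_{\wt R}$-adic length. Each consecutive quotient $W_n/W_{n+1}$ identifies with the quotient of the one-dimensional $\ka(\pp^{(1)})$-vector space $\mm_{\wt R}^n/\mm_{\wt R}^{n+1}$ by the image of $R\cap\mm_{\wt R}^n$, and this image is an $R/\mm_R$-subspace of $\ka(\pp^{(1)})$ (the $R$-stability ensures it, and $\mm_R$ acts by zero on the graded piece). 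Setting $m:=[R/\mm_R:K^{1/p}]$ and $d:=[\ka(\pp^{(1)}):R/\mm_R]$, so that $\deg(\pp_1)=md$, summing $K^{1/p}$-dimensions yields $\De(\pp)=m\ell$ with $\ell:=\sum_n(d-d_n)\in\Z_{\geq 0}$, where $d_n\in\{0,1,\dots,d\}$ records the $R/\mm_R$-dimension of the image of $R\cap\mm_{\wt R}^n$ in the $n$-th graded piece. The claim $\deg(\pp_1)\mid\tfrac{2}{p-1}\De(\pp)$ is thus equivalent to the divisibility $(p-1)d\mid 2\ell$.

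The main obstacle is establishing this last divisibility, which strictly refines Stöhr's divisibility from Proposition~\ref{2024_07_14_19:10} that only guarantees $\tfrac{p-1}{2}\mid m\ell$. I would tackle it by combining two ingredients: a Tate-type local symmetry on the finite-dimensional $K^{1/p}$-algebra $\wt R/\cc$ (where $\cc$ is the conductor), in the spirit of the local proof that $(p-1)/2$ divides the genus drop, which pairs graded contributions indexed by $n$ with those by $\sim p-1-n$ and supplies the factor $(p-1)/2$; and an analysis of the residue extension past $R/\mm_R$, which forces any nontrivial $d_n$ to fit a rigid pattern inside the $d$-dimensional $R/\mm_R$-space $\ka(\pp^{(1)})$ and thereby supplies the factor $d$. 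Once $(p-1)d\mid 2\ell$ is in hand, $\De(\pp)=\tfrac{p-1}{2}\deg(\pp_1)\cdot k$ for some non-negative integer $k$, so $\deg(\pp_1)$ divides the integer $\tfrac{2}{p-1}\De(\pp)$, as required.
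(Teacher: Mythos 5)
Your setup is sound and your reduction is correct: after passing to $R:=K^{1/p}\cdot\OO_\pp$ and $\wt R:=\OO_{\pp^{(1)}}$, the filtration of $\wt R/R$ by $W_n=(R+\mm_{\wt R}^n)/R$ does give $\De(\pp)=m\ell$ with $m=[R/\mm_R:K^{1/p}]$ and $\ell=\sum_n(d-d_n)$, and you correctly identify that everything hinges on the divisibility $(p-1)d\mid 2\ell$. But you never actually establish that divisibility; you only gesture at ``a Tate-type local symmetry'' and ``an analysis of the residue extension,'' without carrying either out. The Tate argument at this level of generality is genuinely delicate, and the residue-field rigidity you invoke (that nontrivial $d_n$ ``fit a rigid pattern'' supplying the factor $d$) is not justified and in general is false for an arbitrary local subring of a DVR --- it must use the specific structure of $K^{1/p}\cdot\OO_\pp$. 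So the core of the argument is missing.

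The paper sidesteps both difficulties in one stroke. By St\"ohr's result that $K^{1/p}\cdot\OO_\pp$ is Gorenstein, one has $\dim_{K'}(R/\cc)=\dim_{K'}(\wt R/R)=\De_0$, whence
\[
2\De_0=\dim_{K'}\bigl(\wt R/\cc\bigr),
\]
where $\cc$ is the conductor. Since $\wt R$ is a discrete valuation ring with residue field $\ka(\pp^{(1)})$ of $K'$-degree $\deg(\pp_1)$, and $\cc$ is a nonzero ideal of $\wt R$, hence a power $\mm_{\wt R}^c$, one gets $\dim_{K'}(\wt R/\cc)=c\cdot\deg(\pp_1)$ \emph{for free}: the factor $d$ you were struggling to extract from the graded pieces of $\wt R/R$ appears automatically once you measure $\wt R/\cc$ instead of $\wt R/R$. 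The remaining factor $(p-1)$, i.e.\ that $c$ is divisible by $p-1$, is exactly St\"ohr's Corollary~2.4 of \cite{St88}, which the paper cites directly. In short: replace your direct analysis of $\wt R/R$ by the Gorenstein duality statement about $\wt R/\cc$, and then invoke the known divisibility of the conductor exponent; this replaces the two unproven ingredients in your sketch with one structural fact plus one citation.
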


\begin{proof}
    Let $K':=K^{1/p}$. As $K'{\cdot} \OO_\pp$ is a Gorenstein ring (see \cite[Theorem~1.1(b)]{St88}) we obtain
    \[ 2\De_0 = \dim_{K'} \wt{K'{\cdot} \OO_\pp}/ \cc'_\pp \]
    where $\cc'_\pp$ denotes the conductor ideal of the domain $K'{\cdot} \OO_\pp$ in its integral closure $\wt{K'{\cdot} \OO_\pp}$.
    As $\wt{K'{\cdot} \OO_\pp}$ is the discrete valuation ring $\OO_{\pp^{(1)}}$, the non-zero ideal $\cc_\pp'$ is a power of the maximal ideal of $\OO_{\pp^{(1)}}$, and so the $K'$-dimension of $\wt{K'{\cdot} \OO_\pp} / \cc_\pp'$ is a multiple of $\deg(\pp^{(1)}) = \deg (\pp_1)$.
    By \cite[Corollary~2.4]{St88} this dimension is even a multiple of $(p-1) \deg (\pp_1)$.
\end{proof}

We say that a prime $\pp$ is \emph{separable} if the residue field extension $\ka(\pp)|K$ is separable. 
Every separable prime is non-singular (see e.g. \cite[Corollary~2.6]{FaSc20}).
The proposition below provides a converse.

\begin{prop}\label{2022_02_06_10:20}
A prime $\pp$ is separable if and only if it is non-singular and $\ka(\pp)=\ka(\pp_1)$.
\end{prop}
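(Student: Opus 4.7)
The plan is to prove the two implications separately. For the forward direction I can combine the cited fact that separable primes are non-singular with a purely-inseparable-meets-separable argument on the residue tower. For the reverse direction, the substance is a residue-field computation built from the vanishing $\De_0 = 0$ together with the ramification information encoded by $\ka(\pp) = \ka(\pp_1)$.

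For the only-if direction I would observe that the intermediate extension $\ka(\pp_1) \subset \ka(\pp)$ is separable as a subextension of $\ka(\pp)|K$, while simultaneously being purely inseparable by the paper's remark that each $\ka(\pp_n)|\ka(\pp_{n+1})$ is purely inseparable of degree $1$ or $p$. Hence this extension is trivial, giving $\ka(\pp) = \ka(\pp_1)$, and non-singularity comes for free from the cited result preceding the statement.

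For the if direction, I would extract two ingredients. First, non-singularity together with Proposition~\ref{2022_07_17_12:05} forces $\De_0 = 0$, which is precisely the equality $K^{1/p}\cdot\OO_\pp = \OO_{\pp^{(1)}}$. Second, $\ka(\pp)=\ka(\pp_1)$ means $f(\pp|\pp_1)=1$, so $e(\pp|\pp_1)=p$ from $ef=[F:F_1]=p$, and then the multiplicativity identity $e(\pp^{(1)}|\pp)\cdot e(\pp|\pp_1)=p$ of the paper forces $e(\pp^{(1)}|\pp)=1$. Consequently a uniformizer of $\OO_\pp$ remains a uniformizer of $\OO_{\pp^{(1)}}$, so $K^{1/p}\cdot\pp = \pp^{(1)}$. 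Applying the exact functor $K^{1/p}\otimes_K(-)$ to $0\to\pp\to\OO_\pp\to\ka(\pp)\to 0$ then identifies
\[ K^{1/p}\otimes_K \ka(\pp) \;=\; \OO_{\pp^{(1)}}/\pp^{(1)} \;=\; \ka(\pp^{(1)}), \]
a field. Since reducedness of $\ka(\pp)\otimes_K K^{1/p}$ is the standard characterization of separability for the finite extension $\ka(\pp)|K$, this closes the argument.

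The main subtlety I anticipate is ensuring that $K^{1/p}\cdot\pp$ produces the \emph{full} maximal ideal $\pp^{(1)}$ rather than merely some proper power; it is precisely the hypothesis $\ka(\pp)=\ka(\pp_1)$ that supplies $e(\pp^{(1)}|\pp)=1$ and unlocks this identification. Without it, the quotient $\OO_{\pp^{(1)}}/(\pp^{(1)})^{e}$ would carry nilpotents and the tensor-product criterion for separability could not be invoked.
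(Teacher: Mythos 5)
Your proof is correct, and it takes a genuinely different route from the paper on the harder implication. The forward direction coincides with the paper's: separability makes the purely inseparable tower $\ka(\pp)|\ka(\pp_1)$ trivial, and non-singularity is the fact cited just before the proposition. For the reverse direction, however, the paper simply invokes Stichtenoth (\cite{Sti78}, Satz 2 (ii) and Korollar 1 von Satz 4) to get that, when $\ka(\pp)=\ka(\pp_1)$, singularity of $\pp$ is equivalent to inseparability of $\ka(\pp)|K$. You instead reconstruct that equivalence from ingredients already in the paper: non-singularity forces $\De_0=0$, hence $K^{1/p}\otimes_K\OO_\pp=\OO_{\pp^{(1)}}$; the hypothesis $f(\pp|\pp_1)=1$ together with the fundamental identity $ef=[F:F_1]=p$ and the paper's relation $e(\pp^{(1)}|\pp)\,e(\pp|\pp_1)=p$ forces $e(\pp^{(1)}|\pp)=1$; then flatness of $K^{1/p}|K$ applied to $0\to\pp\to\OO_\pp\to\ka(\pp)\to0$ identifies $\ka(\pp)\otimes_K K^{1/p}$ with the residue field $\ka(\pp^{(1)})$, which is reduced, and reducedness of $\ka(\pp)\otimes_K K^{1/p}$ is the standard criterion for separability of the finite extension $\ka(\pp)|K$. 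This is more self-contained and makes the mechanism transparent (in particular, your closing remark about nilpotents in $\OO_{\pp^{(1)}}/(\pp^{(1)})^{e}$ when $e>1$ is exactly the content hidden in the citation); the one small ingredient you take as given, the fundamental equality $\sum e_if_i=[F:F_1]$ in this possibly imperfect setting, does hold because $\OO_{\pp_1}$ is a localization of a finitely generated $K$-algebra and hence Japanese, so the integral closure in $F$ is module-finite, but it would be worth a sentence since the paper never states it explicitly.
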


\begin{proof}
Since the extension $\ka(\pp)|\ka(\pp_1)$ is purely inseparable, it is trivial if $\pp$ is separable. Thus we may assume $\ka(\pp)=\ka(\pp_1)$. Then \cite{Sti78}, Satz 2 (ii) and Korollar 1 of Satz 4, ensure that $\pp$ is singular if and only if $\ka(\pp)|K$ is inseparable.
\end{proof}

\begin{prop}\label{2022_01_20_15:00}
Let $\pp$ be a prime of $F|K$. Then for sufficiently large $n$ the restricted prime $\pp_n$ is separable and its residue field $\ka(\pp_n)$ is the separable closure of $K$ in $\ka(\pp)$.
\end{prop}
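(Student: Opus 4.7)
The plan is to combine Corollary~\ref{2022_07_17_12:55}(v) (non-singularity of $\pp_n$ for $n$ large) with the separability criterion of Proposition~\ref{2022_02_06_10:20} (which says non-singular plus $\ka(\pp_n)=\ka(\pp_{n+1})$ is equivalent to separability), after first pinning down the eventual value of the residue field.

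First I would observe that since each extension $\ka(\pp_n)|\ka(\pp_{n+1})$ is purely inseparable of degree $1$ or $p$, and $\ka(\pp_0)=\ka(\pp)$ is a finite extension of $K$, the descending chain of subfields
\[ \ka(\pp_0)\supseteq \ka(\pp_1)\supseteq \ka(\pp_2)\supseteq \cdots \]
must stabilize. Let $L$ denote its stable value and let $n_0$ be such that $\ka(\pp_n)=L$ for all $n\geq n_0$; enlarging $n_0$ if necessary, Corollary~\ref{2022_07_17_12:55}(v) further guarantees that $\pp_n$ is non-singular for every $n\geq n_0$. Then for each such $n$ we have both hypotheses of Proposition~\ref{2022_02_06_10:20}, namely that $\pp_n$ is non-singular and that $\ka(\pp_n)=\ka(\pp_{n+1})=L$. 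The proposition therefore yields that $\pp_n$ is separable, so in particular $L|K$ is a separable extension.

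It remains to identify $L$ with the separable closure $K_s$ of $K$ inside $\ka(\pp)$. Since $L|K$ is separable and $L\subseteq \ka(\pp)$, one inclusion $L\subseteq K_s$ is immediate. For the reverse inclusion, I would use that for every $n$ the extension $\ka(\pp)|\ka(\pp_n)$ is purely inseparable, being a composition of the purely inseparable steps $\ka(\pp_i)|\ka(\pp_{i+1})$. Hence any element of $\ka(\pp)$ which is separable over $K$ is a fortiori separable over $\ka(\pp_n)$, and therefore already belongs to $\ka(\pp_n)$; taking $n\geq n_0$ this shows $K_s\subseteq L$, which completes the identification.

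I do not anticipate a real obstacle here: the content is just that Corollary~\ref{2022_07_17_12:55}(v) handles eventual non-singularity and Proposition~\ref{2022_02_06_10:20} converts this into separability once the residue-field tower has stabilized. The only mildly delicate point is the second inclusion $K_s\subseteq L$, which hinges on recognizing that each $\ka(\pp)|\ka(\pp_n)$ is purely inseparable so that no element of $\ka(\pp)$ separable over $K$ can escape $\ka(\pp_n)$.
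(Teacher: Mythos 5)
Your argument is correct and follows essentially the same route as the paper: stabilization of the residue-field tower, eventual non-singularity via Corollary~\ref{2022_07_17_12:55}, separability via Proposition~\ref{2022_02_06_10:20}, and identification of the stable field with the separable closure from the pure inseparability of $\ka(\pp)|\ka(\pp_n)$. The paper states the final identification more tersely, but the reasoning is the same.
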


\begin{proof}
As $\ka(\pp) \supseteq \ka(\pp_1) \supseteq \ka(\pp_2) \supseteq \dots \supseteq K$ and $[\ka(\pp):K]$ is finite we deduce that $\ka(\pp_n) = \ka(\pp_{n+1}) = \cdots$ for sufficiently large $n$.
Moreover, for sufficiently large $n$ the prime $\pp_n$ is non-singular by Corollary~\ref{2022_07_17_12:55}, and so $\pp_n$ is separable by Proposition~\ref{2022_02_06_10:20}.
As $\ka(\pp_n)|K$ is separable and $\ka(\pp)|\ka(\pp_n)$ is purely inseparable we conclude that $\ka(\pp_n)$ is the separable closure of $K$ in $\ka(\pp)$.
\end{proof}

We ask for an explicit description of the integers $n$ for which the restricted primes $\pp_n$ are separable. The answer is rather easy if the prime $\pp$ is non-singular.

\begin{prop}\label{2023_01_25_19:00}
Let $\pp$ be a non-singular prime of $F|K$, and let $m:= \log_p [\ka(\pp):K]_{insep}$, i.e., let $p^m$ be the inseparable degree of the residue field extension $\ka(\pp)|K$. Then
\[ [\ka(\pp):\ka(\pp_i)] = p^i \quad \text{for each $i=1,\dots,m$,} \]
and $m$ is the smallest integer such that $\pp_m$ is separable.
\end{prop}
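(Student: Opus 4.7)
The plan is to pass through the chain of residue field extensions $\ka(\pp) \supseteq \ka(\pp_1) \supseteq \ka(\pp_2) \supseteq \cdots$ and track when it stabilizes. Every step in this chain is purely inseparable of degree $1$ or $p$, so it suffices to identify exactly which steps are \emph{strict} (degree $p$) and which are \emph{trivial} (degree $1$).

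First I would observe that the non-singularity of $\pp$ propagates along the tower: since $\de(\pp) = \De_0 + \De_1 + \cdots = 0$ by Proposition~\ref{2022_07_17_12:05}, each $\De_n$ vanishes, and so by Corollary~\ref{2022_07_17_12:55}(iii) every restricted prime $\pp_n$ is non-singular. In particular Proposition~\ref{2022_02_06_10:20} applies at every level: $\pp_n$ is separable if and only if $\ka(\pp_n)=\ka(\pp_{n+1})$.

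Next, let $m'$ denote the smallest integer such that $\pp_{m'}$ is separable; its existence is guaranteed by Proposition~\ref{2022_01_20_15:00}. The key monotonicity step is that once separability is reached it persists: if $\pp_n$ is separable then $\ka(\pp_n)|K$ is separable, hence so is its subfield $\ka(\pp_{n+1})|K$, and combined with non-singularity this forces $\pp_{n+1}$ to be separable as well. Iterating, $\ka(\pp_{m'}) = \ka(\pp_{m'+1}) = \ka(\pp_{m'+2}) = \cdots$, and by Proposition~\ref{2022_01_20_15:00} this common field must equal the separable closure $L$ of $K$ inside $\ka(\pp)$. For indices $i<m'$, by minimality $\pp_i$ is not separable, and being non-singular it fails the criterion of Proposition~\ref{2022_02_06_10:20}, forcing $[\ka(\pp_i):\ka(\pp_{i+1})]=p$.

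Finally, telescoping the degrees in the chain $\ka(\pp) \supsetneq \ka(\pp_1) \supsetneq \cdots \supsetneq \ka(\pp_{m'}) = L$ yields $[\ka(\pp):L] = p^{m'}$. But by definition $[\ka(\pp):L] = [\ka(\pp):K]_{insep} = p^m$, so $m'=m$, and the same telescoping gives $[\ka(\pp):\ka(\pp_i)] = p^i$ for $i=1,\dots,m$. No step in the argument is really an obstacle; the only point that requires a moment of care is the persistence of separability once it is reached, which relies crucially on the non-singularity of all $\pp_n$ so that Proposition~\ref{2022_02_06_10:20} can be invoked repeatedly.
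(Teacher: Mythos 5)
Your proof is correct and follows essentially the same route as the paper's: establish that every restricted prime $\pp_n$ is non-singular, then invoke Proposition~\ref{2022_02_06_10:20} at each level to conclude that the step $\ka(\pp_n)\supseteq\ka(\pp_{n+1})$ has degree $p$ precisely when $\pp_n$ is inseparable, and read off $m$ by telescoping down to the separable closure. The paper states this more tersely (stopping at the biconditional and leaving the counting implicit), but your added details — the persistence of separability via the subfield argument and the identification of the stabilized field with $L$ via Proposition~\ref{2022_01_20_15:00} — are sound and just make explicit what the paper leaves to the reader.
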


\begin{proof}
In the descending chain
\[ \ka(\pp) \supseteq \ka(\pp_1) \supseteq \ka(\pp_2) \supseteq \dots \supseteq K \]
the extensions $\ka(\pp_n)|\ka(\pp_{n+1})$ are purely inseparable of degree $p$ or $1$ for each $n$.
As the prime $\pp$ and therefore the restricted primes $\pp_n$ are non-singular, it follows from Proposition~\ref{2022_02_06_10:20} that $[\ka(\pp_n):\ka(\pp_{n+1})]=p$ if and only if $\pp_n$ is inseparable.
\end{proof}

An analogous result is much more involved if the prime $\pp$ is singular (see Theorem~\ref{2022_03_04_12:45}).
The reason is that the extension $\ka(\pp_n)|\ka(\pp_{n+1})$ may be trivial when $\pp_n$ is singular, in which case the equalities $[\ka(\pp):\ka(\pp_i)] = p^i$ in the proposition no longer hold.

We now study the primes of the separable base field extension $K^{sep}F|K^{sep}$. 

\begin{prop}\label{2023_02_06_13:00}
Let $\pp$ be a prime of $F|K$.
Then the number of the primes of $K^{sep} F | K^{sep}$ that lie over $\pp$ is equal to the separable degree $[\ka(\pp):K]_{sep}$ of the residue field extension $\ka(\pp)|K$.
Moreover, each such extended prime $\fq$ has degree $\deg(\fq) = [\ka(\pp):K]_{insep}$ and geometric singularity degree $\de(\fq)=\delta(\pp)/[\ka(\pp):K]_{sep}$. 
\end{prop}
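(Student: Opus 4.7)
The plan is to analyze the base-changed semilocal domain $K^{sep}{\cdot}\OO_\pp = K^{sep}\otimes_K\OO_\pp$ through its closed fibre $K^{sep}\otimes_K\ka(\pp)$, to exploit the separability of $K^{sep}|K$ in order to show that $K^{sep}{\cdot}\OO_\pp$ is already integrally closed, and then to derive the $\de$-invariant statement by combining the additivity formula~\eqref{2023_02_22_20:25} with Galois equivariance. The delicate step will be the integral closedness: it is the separability of $K^{sep}|K$ that makes the closed fibre reduced and thereby removes any singularity at the $K^{sep}$-level.

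Since $K^{sep}{\cdot}\OO_\pp$ is integral over the DVR $\OO_\pp$, its maximal ideals are in bijection with the primes of $K^{sep}F|K^{sep}$ lying over $\pp$, and equally with the maximal ideals of the closed fibre $K^{sep}\otimes_K\ka(\pp)$. Let $L$ denote the separable closure of $K$ inside $\ka(\pp)$, so that $[L:K]=[\ka(\pp):K]_{sep}$ and $\ka(\pp)|L$ is purely inseparable of degree $[\ka(\pp):K]_{insep}$. I decompose
\[ K^{sep}\otimes_K\ka(\pp) \;=\; (K^{sep}\otimes_K L)\otimes_L\ka(\pp) \;=\; \prod_{\sigma\in\Hom_K(L,K^{sep})} K^{sep}\otimes_{L,\sigma}\ka(\pp), \]
obtaining $[L:K]$ factors, each of $K^{sep}$-dimension $[\ka(\pp):L]$. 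Each factor is local because $\ka(\pp)|L$ is purely inseparable, and reduced because $K^{sep}|L$ is separable; hence each is a field. This proves (i) and (ii), and further shows that the closed fibre of $K^{sep}{\cdot}\OO_\pp$ is reduced, so each of its localizations at a maximal ideal has principal maximal ideal and is a DVR; consequently $K^{sep}{\cdot}\OO_\pp = \bigcap_{i=1}^s \OO_{\fq_i}$ is already integrally closed in $K^{sep}F$.

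For (iii), using $\ov K{\cdot}(K^{sep}{\cdot}\OO_\pp) = \ov K{\cdot}\OO_\pp$ and applying formula~\eqref{2023_02_22_20:25} to the semilocal domain $K^{sep}{\cdot}\OO_\pp = \bigcap_i \OO_{\fq_i}$ with algebraic extension $\ov K | K^{sep}$, I obtain
\[ \de(\pp) \;=\; \dim_{\ov K} \wt{\ov K{\cdot}\OO_\pp}/\ov K{\cdot}\OO_\pp \;=\; \sum_{i=1}^s \dim_{\ov K} \wt{\ov K{\cdot}\OO_{\fq_i}}/\ov K{\cdot}\OO_{\fq_i} \;=\; \sum_{i=1}^s \de(\fq_i). \]
Finally, since $K$ is algebraically closed in $F$ the extension $K^{sep}F|F$ is Galois with group $\operatorname{Gal}(K^{sep}|K)$, which permutes $\{\fq_1,\dots,\fq_s\}$ transitively by standard extension theory of valuations. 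Extending any $K$-automorphism of $K^{sep}$ uniquely to one of $\ov K$ induces $\ov K$-semilinear isomorphisms between the pairs $(\ov K{\cdot}\OO_{\fq_i},\wt{\ov K{\cdot}\OO_{\fq_i}})$, which preserve $\ov K$-codimensions; hence $\de(\fq_i)$ is independent of $i$ and we conclude that $\de(\fq_i) = \de(\pp)/s$.
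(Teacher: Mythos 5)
Your proof is correct, and it takes a genuinely different route from the paper. The paper works first with a finite separable extension $L|K$: it invokes nondegeneracy of the trace form to show that the integral closure of $\OO_\pp$ in $LF$ is $L\otimes_K\OO_\pp$ (i.e., the $L$-singularity degree vanishes), from which it deduces that the Dedekind different is trivial, hence $e_i=1$, that each $\ka(\fq_i)$ is generated by the images of $\ka(\pp)$ and $L$, and then applies the fundamental identity $[L:K]=\sum e_i f_i$; it only then passes to $L=K^{sep}$. You instead analyze the closed fibre $K^{sep}\otimes_K\ka(\pp)$ directly, splitting off the separable closure $L$ of $K$ in $\ka(\pp)$ to write it as a product of $[L:K]$ local reduced Artinian $K^{sep}$-algebras, hence fields, and then use reducedness of the fibre to see that the uniformizer of $\OO_\pp$ already generates each maximal ideal, so the localizations are DVRs and $K^{sep}\cdot\OO_\pp$ is normal. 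Your route avoids the different-and-trace machinery and is more self-contained; the paper's route, by working at finite level $L$, directly produces the intermediate statement that feeds the subsequent corollary (that the decomposition already happens over the separable closure of $K$ in $\ka(\pp)$). One small presentational remark: your opening sentence asserts that the maximal ideals of $K^{sep}\cdot\OO_\pp$ are in bijection with the primes of $K^{sep}F|K^{sep}$ over $\pp$; for a general non-normal semilocal domain this bijection can fail, so strictly speaking this claim is only justified once you have established (two sentences later) that $K^{sep}\cdot\OO_\pp$ is integrally closed --- the content is right, but the bijection should be stated after the normality, not before. The argument for (iii) via the additivity formula~\eqref{2023_02_22_20:25} and Galois conjugacy matches the paper's.
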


\begin{proof}
Let $L$ be a finite separable extension of $K$, and let $\fq_1,\dots,\fq_r$ be the primes of $LF|L$ lying over $\pp$. Then
\[ [L:K] = \sum_{i=1}^r e_i f_i \]
where $e_i$ and $f_i$ are the ramification indices and the inertia indices of $\fq_i$ over $\pp$ respectively.
As the trace of $L{\cdot}F|F = (L \otimes_K F)|F$ is equal to $\mathrm{tr}_{L|K}\otimes \mathrm{id}_F$ where $\mathrm{tr}_{L|K}$ denotes the trace of the finite separable extension $L|K$, we deduce that the integral closure $\OO_{\fq_1} \cap \dots \cap \OO_{\fq_r}$ of $\OO_\pp$ in $L{\cdot} F$ is equal to $L\otimes_K \OO_\pp$ (i.e., the $L$-singularity degree of $\OO_\pp$ is zero).
It follows that the exponents of the Dedekind different of $L{\cdot} F | F$ are equal to zero, and so the ramification indices $e_i$ are equal to one.
It also follows that each residue field $\ka(\fq_i)$ ($i=1,\dots,r$) is generated by the images of $\ka(\pp)$ and $L$ inside it.

If $L$ contains the separable closure of $K$ in $\kappa({\pp})$ then $f_i = [L:K]/[\kappa({\pp}):K]_{sep}$ and $r = [\ka(\pp):K]_{sep}$, so in particular $\deg(\fq_i) = \deg(\pp)/r=[\ka(\pp):K]_{insep}$. 
Note that the equality $r = [\ka(\pp):K]_{sep}$ holds without assuming that $[L:K]$ is finite, and so it holds for $L = K^{sep}$. 
Since a similar remark applies to the identity $\deg(\fq_i) = [\ka(\pp):K]_{insep}$, we conclude that there are precisely $[\ka(\pp):K]_{sep}$ primes of $K^{sep} F|K^{sep}$ lying over $\pp$, each of degree $[\ka(\pp):K]_{insep}$.
It remains to compute their geometric singularity degrees. 
By the preceding paragraph, the $K^{sep}$-singularity degree of $\pp$ is zero. In light of \eqref{2023_02_22_20:30}, this means that the geometric singularity degree $\de(\pp)$ is equal to the $\ov K$-singularity degree of the semilocal ring $K^{sep} {\cdot} \OO_\pp$, which by \eqref{2023_02_22_20:25} is equal to the sum of the geometric singularity degrees of the primes of $K^{sep} F | K^{sep}$ lying over $\pp$.
But these primes are conjugate because the field extension $K^{sep} F|F$ is normal,
so it follows that their geometric singularity degrees coincide, that is, each of them has geometric singularity degree $\de(\pp)/[\ka(\pp):K]_{sep}$.
\end{proof}

\begin{cor}
The decomposition of the prime $\pp$ into $[\ka(\pp):K]_{sep}$ primes $\fq$ with $\deg(\fq)=[\ka(\pp):K]_{insep}$ and $\de(\fq) = \de(\pp) /[\ka(\pp):K]_{sep}$ already occurs in the extended function field $LF|L$, where $L$ is the separable closure of $K$ in $\ka(\pp)$.
\end{cor}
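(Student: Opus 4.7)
The strategy is to reduce the corollary to Proposition~\ref{2023_02_06_13:00} by applying that proposition twice --- once to $\pp$ in $F|K$ and once to each prime of $LF|L$ lying over $\pp$. Since $L|K$ is a finite separable extension that contains (indeed equals) the separable closure of $K$ in $\ka(\pp)$, the first part of the proof of Proposition~\ref{2023_02_06_13:00} directly yields the number and degrees of the primes above $\pp$ in $LF|L$.

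Concretely, that argument gives $r = [\ka(\pp):K]_{sep}$ unramified primes $\fq_1,\dots,\fq_r$ of $LF|L$ above $\pp$ with inertia indices $f_i = [L:K]/[\ka(\pp):K]_{sep} = 1$. In particular, each residue field $\ka(\fq_i)$ coincides with $\ka(\pp)$, so $\deg(\fq_i) = [\ka(\pp):L] = [\ka(\pp):K]_{insep}$, as claimed.

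For the geometric singularity degrees I apply Proposition~\ref{2023_02_06_13:00} a second time, now to each $\fq_i$ viewed as a prime of $LF|L$. Because $\ka(\fq_i) = \ka(\pp)$ is purely inseparable over $L$, we have $[\ka(\fq_i):L]_{sep} = 1$, hence a unique prime $\mathfrak{Q}_i$ of $L^{sep}(LF)|L^{sep}$ lies above $\fq_i$, and it satisfies $\de(\mathfrak{Q}_i) = \de(\fq_i)/1 = \de(\fq_i)$. Since $L|K$ is separable algebraic we have $L^{sep} = K^{sep}$ and $L^{sep}(LF) = K^{sep}F$, so $\mathfrak{Q}_i$ is actually a prime of $K^{sep}F|K^{sep}$ lying above $\pp$. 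Comparing with Proposition~\ref{2023_02_06_13:00} applied directly to $\pp$ we get $\de(\mathfrak{Q}_i) = \de(\pp)/[\ka(\pp):K]_{sep}$, whence $\de(\fq_i) = \de(\pp)/[\ka(\pp):K]_{sep}$.

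The only subtle point to watch is that the finite separable extension $L|K$ need not be normal, so one cannot simply appeal to Galois conjugation among the $\fq_i$ to conclude that their geometric singularity degrees coincide. The two-step passage through the separable closure $K^{sep}$ circumvents this obstacle; apart from it, the proof is essentially a clean repackaging of the content of Proposition~\ref{2023_02_06_13:00}.
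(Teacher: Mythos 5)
Your proof is correct, and it takes a mildly different route from the paper's for the $\delta$-invariant part. For the number and degrees of the primes above $\pp$, both you and the paper read them off from the first half of the proof of Proposition~\ref{2023_02_06_13:00} (using that $L$ equals the separable closure of $K$ in $\ka(\pp)$, which forces the inertia indices to be~$1$). Where you diverge is in showing $\de(\fq_i)=\de(\pp)/[\ka(\pp):K]_{sep}$. The paper passes to the (finite) normal closure $L'$ of $L|K$ and re-uses the internal steps of the proposition's proof: it observes that there is a unique prime $\fq'$ of $L'F|L'$ over $\fq_i$, that $\fq_i$ has $L'$-singularity degree zero so $\de(\fq_i)=\de(\fq')$, and that the primes over $\pp$ in $L'F|L'$ are Galois-conjugate since $L'|K$ is normal, giving $\de(\fq')=\de(\pp)/[\ka(\pp):K]_{sep}$. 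You instead invoke the \emph{statement} of Proposition~\ref{2023_02_06_13:00} twice — once for $\fq_i$ in $LF|L$ (using $[\ka(\fq_i):L]_{sep}=1$ since $\ka(\fq_i)\cong\ka(\pp)$ is purely inseparable over $L$, which identifies $\de(\fq_i)$ with the $\delta$-invariant of the unique prime $\mathfrak{Q}_i$ of $K^{sep}F|K^{sep}$ above it), and once for $\pp$ in $F|K$ (to pin down $\de(\mathfrak{Q}_i)$). This avoids any appeal to Galois conjugation or to the zero-singularity-degree observation, at the cost of passing through the infinite extension $K^{sep}=L^{sep}$ rather than the finite normal closure; both routes neatly sidestep the fact that $L|K$ need not be normal. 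Your version is arguably the cleaner packaging, since the second half uses only what the proposition asserts rather than how it is proved.
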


\begin{proof}
The proof of the proposition shows that there are exactly $[L:K]_{sep}$ primes $\fq$ in $LF|L$ above $\pp$, which have $\deg (\fq) = [L:K]_{insep}$.
Passing to the normal closure $L'$ of $L|K$, the proof also shows that the only prime $\fq'$ in $L'F|L'$ above a given prime $\fq$ has $\de(\fq') = \de(\pp)/[L:K]_{sep}$, and that $\fq$ has $L'$-singularity degree zero, i.e., $\de(\fq) = \de(\fq')$.
\end{proof}

\begin{cor}
For a prime $\pp$ the following assertions are equivalent
\begin{enumerate}[\upshape (i)]
    \item $\pp$ is separable,
    \item $\pp$ decomposes into rational primes in the extended function field $K^{sep} F | K^{sep}$,
    \item there is a finite separable extension field $L$ of $K$ such that $\pp$ decomposes into rational primes in $LF |L$.
\end{enumerate}
\end{cor}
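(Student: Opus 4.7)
The plan is to obtain the three equivalences by reading them off Proposition~\ref{2023_02_06_13:00} and its preceding Corollary, with at most two applications of the former. The key numerical input is that every prime of $K^{sep}F|K^{sep}$ lying above $\pp$ has degree $[\ka(\pp):K]_{insep}$.

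For (i) $\Leftrightarrow$ (ii) I would simply note that rationality of the primes of $K^{sep}F|K^{sep}$ above $\pp$ is equivalent to $[\ka(\pp):K]_{insep}=1$, which is by definition the separability of $\pp$. For (i) $\Rightarrow$ (iii) I would exhibit a concrete witness: take $L$ to be the separable closure of $K$ inside $\ka(\pp)$, which is finite separable over $K$. By the preceding Corollary the decomposition of $\pp$ already occurs in $LF|L$, into $[\ka(\pp):K]_{sep}$ primes of degree $[\ka(\pp):K]_{insep}$; when $\pp$ is separable the latter equals $1$, so these primes are rational.

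The remaining implication (iii) $\Rightarrow$ (i) I would handle by applying Proposition~\ref{2023_02_06_13:00} a second time, now with the finite separable $L|K$ supplied by the hypothesis as the base field and with $LF$ as the ambient function field. A rational prime $\fq$ of $LF|L$ lying over $\pp$ has $\ka(\fq)=L$, so $[\ka(\fq):L]_{insep}=1$, which via the proposition forces the unique prime of $L^{sep}F|L^{sep}$ above $\fq$ to be rational. Using the identity $L^{sep}=K^{sep}$ (valid because separability is transitive over $K$: any element of $\ov K$ separable over the separable extension $L|K$ is separable over $K$) one concludes that every prime of $K^{sep}F|K^{sep}$ over $\pp$ is rational, i.e., (ii), whence (i) by the first step.

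The chief obstacle is essentially bookkeeping: one must recognise that the separable closure of $L$ appearing when one re-applies Proposition~\ref{2023_02_06_13:00} to $LF|L$ is the same field $K^{sep}$ featuring in statement (ii). Beyond this identification I do not anticipate any genuinely hard step.
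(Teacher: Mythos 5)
The paper states this corollary without proof, so there is no written argument to compare against; evaluated on its own merits, your proposal is correct. Each of the three legs works: (i)\,$\Leftrightarrow$\,(ii) is an immediate read-off from Proposition~\ref{2023_02_06_13:00}, since the degree of every prime of $K^{sep}F|K^{sep}$ over $\pp$ is $[\ka(\pp):K]_{insep}$, which equals $1$ precisely when $\ka(\pp)|K$ is separable; (i)\,$\Rightarrow$\,(iii) via the preceding corollary with $L$ the separable closure of $K$ in $\ka(\pp)$ is exactly right; and your second application of Proposition~\ref{2023_02_06_13:00} over the base $L$ (together with the observation $L^{sep}=K^{sep}$, so $L^{sep}\!\cdot\!LF=K^{sep}F$) does establish (iii)\,$\Rightarrow$\,(ii). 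The implicit hypotheses for the second application of the proposition are satisfied because $LF|L$ is again a one-dimensional separable function field, $L|K$ being separable and $K$ algebraically closed in $F$.

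One small simplification worth noting: for (iii)\,$\Rightarrow$\,(i) you can bypass the detour through (ii) entirely. Any prime $\fq$ of $LF|L$ over $\pp$ induces an embedding $\ka(\pp)\toinj\ka(\fq)$ over $K$; if $\fq$ is rational then $\ka(\fq)=L$, so $\ka(\pp)$ embeds into the separable extension $L|K$ and is therefore itself separable over $K$. This avoids a second invocation of Proposition~\ref{2023_02_06_13:00} and the identification $L^{sep}=K^{sep}$, but your longer route is equally valid.
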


\begin{cor}\label{2023_06_18_11:05}
The number of the primes of $\ov K F | \ov K$ lying over a prime $\pp$ is equal to the separable degree $[\ka(\pp):K]_{sep}$ of the residue field extension $\ka(\pp)|K$.
\end{cor}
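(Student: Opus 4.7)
The plan is to bridge the residue-field count from $K^{sep}$ up to $\ov K$ in two stages, exploiting that $\ov K | K^{sep}$ is purely inseparable.

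First, I would apply Proposition~\ref{2023_02_06_13:00} directly: there are exactly $[\ka(\pp):K]_{sep}$ primes of the separable base extension $K^{sep} F | K^{sep}$ lying over $\pp$. So it suffices to show that each such prime $\fq$ has a \emph{unique} extension to a prime of $\ov K F | \ov K$, because then the total count over $\pp$ is $[\ka(\pp):K]_{sep} \cdot 1$.

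For the second stage, I would use that $\ov K | K^{sep}$ is purely inseparable in characteristic $p>0$, so that the base-changed field extension $\ov K F | K^{sep} F$ is purely inseparable as well. In a purely inseparable extension of one-dimensional function fields, a prime admits a unique extension: concretely, for any prime $\fq'$ of $\ov K F |\ov K$ lying over $\fq$, one has $\OO_{\fq'} \cap K^{sep} F = \OO_\fq$, and if $z \in \ov K F$ then $z^{p^n} \in K^{sep} F$ for some $n$, so the condition $z \in \OO_{\fq'}$ is forced by $z^{p^n} \in \OO_\fq$. Equivalently, the valuation ring $\OO_{\fq'}$ is the integral closure of $\OO_\fq$ in $\ov K F$, which is unambiguously determined by $\fq$. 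This shows uniqueness.

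Combining the two stages yields exactly $[\ka(\pp):K]_{sep}$ primes of $\ov K F |\ov K$ above $\pp$. The only subtlety I would need to be careful about is the uniqueness assertion in the purely inseparable step; however, since we are in the one-dimensional function-field setting where valuation rings coincide with discrete valuation rings in the fraction field, this is a standard and short verification, so no serious obstacle is expected.
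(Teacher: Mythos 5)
Your proposal is correct and follows essentially the same route as the paper: invoke Proposition~\ref{2023_02_06_13:00} for the count over $K^{sep}$, and then note that the further base change to $\ov K$ is purely inseparable, so no additional splitting can occur. The paper states the second step in one line ("primes are non-decomposed in purely inseparable base field extensions"), whereas you spell out why a purely inseparable extension of function fields forces a unique extension of each valuation ring, which is a fine elaboration of the same idea.
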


\begin{proof}
Since primes are \emph{non-decomposed} in purely inseparable base field extensions, 
the number of the primes of $\overline{K}F|\overline{K}$ lying over $\pp$ concides with the number of the primes of $K^{sep} F | K^{sep}$ lying over $\pp$.
\end{proof}

\begin{cor}\label{2023_02_23_14:20}
Let $\pp$ be a prime of $F|K$.
Then $[\ka(\pp):K]_{sep}$ divides the geometric singularity degree $\de(\pp_n)$ and the $K^{1/p}$-singularity degree $\De_n=\De(\pp_n)$ for each non-negative integer $n$.
If $p>2$ then $[\ka(\pp):K]_{sep}$ also divides the integers $\frac 2 {p-1} \de(\pp_n)$ and $\frac 2 {p-1} \De(\pp_n)$.
\end{cor}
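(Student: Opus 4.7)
The plan is to reduce everything to the single statement ``$[\ka(\pp):K]_{sep}$ divides $\de(\pp)$'', and then bootstrap to $\pp_n$ and $\De_n$ using results already established in this section.

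First I would handle $\de(\pp)$ itself. Proposition~\ref{2023_02_06_13:00} exhibits, for each prime $\fq$ of $K^{sep}F|K^{sep}$ lying over $\pp$, the equality $\de(\fq) = \de(\pp)/[\ka(\pp):K]_{sep}$. Since $\de(\fq)$ is a non-negative integer by definition, it follows immediately that $[\ka(\pp):K]_{sep}$ divides $\de(\pp)$. When $p>2$, Proposition~\ref{2022_07_17_12:05} applied to $\fq$ says that $\de(\fq)$ is a multiple of $(p-1)/2$, so
\[ \frac{2}{p-1}\,\de(\pp) = [\ka(\pp):K]_{sep}\cdot\frac{2}{p-1}\,\de(\fq) \]
is a multiple of $[\ka(\pp):K]_{sep}$, giving the strengthened divisibility in odd characteristic.

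Next I would pass from $\pp$ to the restricted prime $\pp_n$. The residue extension $\ka(\pp)|\ka(\pp_n)$ is purely inseparable (as noted in the discussion right after the definition of $\pp_n$), hence the multiplicativity of separable degree gives
\[ [\ka(\pp_n):K]_{sep} = [\ka(\pp):K]_{sep}. \]
Applying the previous paragraph to $\pp_n$ instead of $\pp$ now shows that $[\ka(\pp):K]_{sep}$ divides $\de(\pp_n)$, and, when $p>2$, also divides $\tfrac{2}{p-1}\de(\pp_n)$.

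Finally, for $\De_n$ I would not rerun the whole argument but instead invoke the identity $\De_n = \de(\pp_n) - \de(\pp_{n+1})$ from Corollary~\ref{2022_07_17_12:55}(ii). Since both $\de(\pp_n)$ and $\de(\pp_{n+1})$ are divisible by $[\ka(\pp):K]_{sep}$ by the previous step, so is their difference $\De_n$; and in odd characteristic the same formula applied to the multiples of $(p-1)/2$ shows that $[\ka(\pp):K]_{sep}$ divides $\tfrac{2}{p-1}\De_n$ as well. I do not anticipate a serious obstacle here: the content sits entirely in Proposition~\ref{2023_02_06_13:00}, and the remaining manipulations are formal consequences of the pure-inseparability of $\ka(\pp)|\ka(\pp_n)$ together with Corollary~\ref{2022_07_17_12:55}.
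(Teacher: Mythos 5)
Your proposal is correct and follows essentially the same route as the paper's own proof: you invoke Proposition~\ref{2023_02_06_13:00} for the identity $\de(\pp) = [\ka(\pp):K]_{sep}\,\de(\fq)$, Proposition~\ref{2022_07_17_12:05} for divisibility by $(p-1)/2$, the invariance $[\ka(\pp_n):K]_{sep}=[\ka(\pp):K]_{sep}$ coming from pure inseparability of $\ka(\pp)|\ka(\pp_n)$, and the identity $\De_n=\de(\pp_n)-\de(\pp_{n+1})$. The only difference is presentational: the paper states these ingredients more tersely in a single paragraph, whereas you spell out the reduction step by step.
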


\begin{proof}
For every prime $\fq$ of $K^{sep}F|K^{sep}$ lying over $\pp$ we have $\de(\pp) = [\ka(\pp):K]_{sep} \, \de(\fq)$, where both $\de(\pp)$ and $\de(\fq)$ are divisible by $\frac{p-1}{2}$ when $p>2$ (see Proposition~\ref{2022_07_17_12:05}).
Analogous statements hold for the restricted primes $\pp_n$.
Note now that each $\pp_n$ has separable degree $[\ka(\pp_n):K]_{sep}=[\ka(\pp):K]_{sep}$ and $K^{1/p}$-singularity degree $\De_{n} = \de(\pp_n) - \de(\pp_{n+1})$.
\end{proof}

We say that a prime $\pp$ of $F|K$ is \textit{decomposed} if it is decomposed in the constant field extension $\ov K F | \ov K$, i.e., if there is more than one prime of $\ov K F|\ov K$ lying over $\pp$,
i.e., if its local ring $\OO_\pp$ is not geometrically unibranch.
For an example of a decomposed singular prime we refer to \cite[Example~2.5]{Sal14}.

\begin{cor}\label{2023_03_20_10:40}
    For a prime $\pp$ the following assertions are equivalent
    \begin{enumerate}[\upshape (i)]
        \item \label{2022_10_05_14:00} $\pp$ is non-decomposed,
        \item \label{2022_10_05_14:05} the residue field extension $\ka(\pp)|K$ is purely inseparable,
        \item \label{2022_10_05_14:10} there is an integer $n\geq0$ such that the prime $\pp_n$ is rational.
    \end{enumerate}
\end{cor}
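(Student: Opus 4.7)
The plan is to prove the chain of implications (i) $\Leftrightarrow$ (ii) $\Leftrightarrow$ (iii), drawing on the previous corollaries and propositions of this section. The key inputs are Corollary~\ref{2023_06_18_11:05} (which translates the number of primes of $\ov K F|\ov K$ over $\pp$ into the separable degree $[\ka(\pp):K]_{sep}$) and Proposition~\ref{2022_01_20_15:00} (which says that for sufficiently large $n$ the residue field $\ka(\pp_n)$ is the separable closure of $K$ in $\ka(\pp)$). None of the implications should be difficult; the proof is essentially a dictionary between the three equivalent formulations.

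\textbf{Step 1: (i) $\Leftrightarrow$ (ii).} By Corollary~\ref{2023_06_18_11:05}, the number of primes of $\ov K F|\ov K$ lying over $\pp$ equals $[\ka(\pp):K]_{sep}$. Hence $\pp$ is non-decomposed precisely when this separable degree equals $1$, which is the same as saying that $\ka(\pp)|K$ is purely inseparable.

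\textbf{Step 2: (ii) $\Rightarrow$ (iii).} Assume $\ka(\pp)|K$ is purely inseparable. Proposition~\ref{2022_01_20_15:00} yields an integer $n$ such that $\pp_n$ is separable and $\ka(\pp_n)$ equals the separable closure of $K$ in $\ka(\pp)$. Under the purely inseparable hypothesis this separable closure is just $K$, so $\ka(\pp_n)=K$ and $\pp_n$ is rational.

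\textbf{Step 3: (iii) $\Rightarrow$ (ii).} If $\pp_n$ is rational for some $n$, then $\ka(\pp_n)=K$. Recall from the discussion of Frobenius pullbacks in this section that each step $\ka(\pp_i)|\ka(\pp_{i+1})$ is purely inseparable (of degree $1$ or $p$), so by composition $\ka(\pp)|\ka(\pp_n)=\ka(\pp)|K$ is purely inseparable.

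The only conceptual point worth watching is the (ii) $\Rightarrow$ (iii) direction: one has to remember that ``separable'' for a prime was defined via the residue field extension, so if $\ka(\pp)|K$ is purely inseparable then a separable $\pp_n$ is automatically forced to be rational, not merely separable of some positive inseparable degree. Everything else is bookkeeping using the previously established structural results.
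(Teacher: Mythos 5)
Your proof is correct, and it uses the same key ingredients as the paper: Corollary~\ref{2023_06_18_11:05} for (i)~$\Leftrightarrow$~(ii), and Proposition~\ref{2022_01_20_15:00} for the connection to~(iii). The only organizational difference is that the paper handles (ii)~$\Leftrightarrow$~(iii) by first noting that ``non-decomposed'' is invariant under passing between $\pp$ and $\pp_n$ (since $F|F_n$ is purely inseparable), then invokes the slogan ``separable~$+$~purely inseparable~$\Rightarrow$~rational,'' whereas you argue directly with the chain of purely inseparable residue field extensions $\ka(\pp) \supseteq \ka(\pp_1) \supseteq \cdots$ in both directions. Both formulations amount to the same thing, and your version is if anything slightly more self-contained.
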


Due to the second condition, non-decomposed points are also called \emph{purely inseparable} or \emph{perfect} in the literature \cite{BL22,Ros21}.

\begin{proof}
The equivalence between \ref{2022_10_05_14:00} and \ref{2022_10_05_14:05} follows immediately from Corollary~\ref{2023_06_18_11:05}. 
We note that $\pp$ is non-decomposed if and only if for some (and any) integer $n \geq 0$ the prime $\pp^{(n)}$, and therefore the prime $\pp_n$, is non-decomposed. By Proposition~\ref{2022_01_20_15:00} there is an integer $n \geq 0$ such that $\pp_n$ is separable. Clearly, a separable prime is purely inseparable if and only if it is rational.
\end{proof}

\begin{cor}\label{2023_20_02_16:05} 
A prime $\pp$ is rational if and only if it is separable and non-decomposed.
\end{cor}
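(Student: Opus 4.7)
The plan is to read off both directions from the characterizations of rational, separable, and non-decomposed primes already established in the preceding corollaries, so that the argument reduces to a tautology about a field extension being both separable and purely inseparable.

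For the forward implication, I would start by unwinding the definitions: $\pp$ being rational means $\deg(\pp) = [\ka(\pp):K] = 1$, i.e., $\ka(\pp) = K$. The extension $\ka(\pp)|K$ is then trivially separable, so $\pp$ is separable in the sense of Proposition~\ref{2022_02_06_10:20}. Moreover the trivial extension is purely inseparable, so by the equivalence \ref{2022_10_05_14:00}$\Leftrightarrow$\ref{2022_10_05_14:05} of Corollary~\ref{2023_03_20_10:40} the prime $\pp$ is non-decomposed.

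For the reverse implication, I would assume $\pp$ is separable and non-decomposed and invoke the same two characterizations in the other direction. Separability of $\pp$ gives that $\ka(\pp)|K$ is a separable field extension, while non-decomposedness, via Corollary~\ref{2023_03_20_10:40}\ref{2022_10_05_14:05}, forces $\ka(\pp)|K$ to be purely inseparable. A field extension that is simultaneously separable and purely inseparable is trivial, hence $\ka(\pp) = K$ and $\pp$ is rational.

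There is no real obstacle here; the only thing to be careful about is citing the correct equivalences (Proposition~\ref{2022_02_06_10:20} for the meaning of ``separable'' and Corollary~\ref{2023_03_20_10:40} for the translation of ``non-decomposed'' into the purely inseparable character of the residue extension), so that the proof reads cleanly without re-deriving facts already available in the section.
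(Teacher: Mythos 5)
Your argument is correct and matches the route the paper takes implicitly (the corollary is stated without a separate proof; the proof of Corollary~\ref{2023_03_20_10:40} already closes with the observation that a separable prime is purely inseparable if and only if it is rational, which is exactly your reverse direction combined with the characterization $\text{non-decomposed} \Leftrightarrow \ka(\pp)|K$ purely inseparable). One small misattribution: the \emph{definition} of a separable prime --- that $\ka(\pp)|K$ is a separable extension --- is given in the running text just before Proposition~\ref{2022_02_06_10:20}, not in the proposition itself (which instead characterizes separability as non-singular plus $\ka(\pp)=\ka(\pp_1)$). Your argument uses the definition, not that characterization, so the citation should point to the definition rather than to the proposition; with that fixed, the proof is clean and complete.
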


In general, it may be hard to decide whether a given prime is non-decomposed.
The corollary below,
which follows immediately from Corollary~\ref{2023_02_23_14:20},
provides a sufficient criterion for a singular prime to be non-decomposed.

\begin{cor}\label{2022_03_04_16:40}
    If $p = 2$ and the integers $\De_0$, $\De_1$, $\De_2,\dots$ are coprime, then the prime $\pp$ is non-decomposed.
    Likewise, 
    if $p>2$ and the integers $\frac 2{p-1}\De_0$, $\frac 2{p-1}\De_1$, $\frac 2{p-1}\De_2,\dots$ are coprime, then $\pp$ is non-decomposed.
\end{cor}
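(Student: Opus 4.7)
The plan is to reduce the claim to the divisibility statement in Corollary~\ref{2023_02_23_14:20} combined with the characterization of non-decomposed primes in Corollary~\ref{2023_03_20_10:40}. The key observation is that non-decomposedness of $\pp$ is equivalent to the vanishing of the separable degree of the residue extension, in the sense that $[\ka(\pp):K]_{sep}=1$, since by Corollary~\ref{2023_03_20_10:40} this holds precisely when $\ka(\pp)|K$ is purely inseparable.

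First I would invoke Corollary~\ref{2023_02_23_14:20} to note that $[\ka(\pp):K]_{sep}$ is a common divisor of all the integers $\De_0,\De_1,\De_2,\dots$ in characteristic $p=2$, and of all the integers $\tfrac{2}{p-1}\De_0,\tfrac{2}{p-1}\De_1,\tfrac{2}{p-1}\De_2,\dots$ in characteristic $p>2$. In either case, the hypothesis that these integers are coprime (i.e., have greatest common divisor equal to $1$) forces
\[
[\ka(\pp):K]_{sep}=1,
\]
so that the residue field extension $\ka(\pp)|K$ is purely inseparable.

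Finally I would apply the equivalence \ref{2022_10_05_14:00}$\Leftrightarrow$\ref{2022_10_05_14:05} of Corollary~\ref{2023_03_20_10:40} to conclude that $\pp$ is non-decomposed. There is essentially no obstacle here: the whole argument is a one-line combination of the two previous corollaries, and the only thing to check is that the divisibility in Corollary~\ref{2023_02_23_14:20} is indeed valid uniformly for every $n$, which is precisely the content of that statement.
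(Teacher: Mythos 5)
Your proposal is correct and matches the paper's approach: the paper states that the corollary ``follows immediately from Corollary~\ref{2023_02_23_14:20},'' and your proof simply spells out that deduction — coprimality forces $[\ka(\pp):K]_{sep}=1$, whence $\ka(\pp)|K$ is purely inseparable and Corollary~\ref{2023_03_20_10:40} gives non-decomposedness.
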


Specializing Proposition~\ref{2023_01_25_19:00} to the non-decomposed case we get

\begin{prop}\label{2021_09_25_17:04}
Let $\pp$ be a non-singular non-decomposed prime of $F|K$, so in particular $\ka(\pp)|K$ is purely inseparable, say of degree $p^m$. Then $m$ is the smallest integer such that $\pp_m$ is rational.
\end{prop}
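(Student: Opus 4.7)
The plan is to derive this proposition directly from Proposition~\ref{2023_01_25_19:00} combined with the characterization of non-decomposed primes in Corollary~\ref{2023_03_20_10:40} and of rational primes in Corollary~\ref{2023_20_02_16:05}.

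First I would observe that, by Corollary~\ref{2023_03_20_10:40}, the non-decomposed hypothesis on $\pp$ forces $\ka(\pp)|K$ to be purely inseparable, so $[\ka(\pp):K]_{sep} = 1$ and $[\ka(\pp):K]_{insep} = [\ka(\pp):K] = p^m$. Hence the integer $m$ in the statement coincides with the integer $m = \log_p [\ka(\pp):K]_{insep}$ appearing in Proposition~\ref{2023_01_25_19:00}. That proposition applies since $\pp$ is non-singular, and it tells us that $m$ is the smallest integer for which $\pp_m$ is separable.

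Next I would upgrade ``separable'' to ``rational'' using non-decomposedness. As noted in the proof of Corollary~\ref{2023_03_20_10:40}, the property of being non-decomposed is inherited by each restricted prime $\pp_n$ (equivalently, primes remain non-decomposed under purely inseparable base extension, so $\pp^{(n)}$ is non-decomposed, hence so is $\pp_n$). Therefore $\pp_m$ is both separable and non-decomposed, and by Corollary~\ref{2023_20_02_16:05} this means $\pp_m$ is rational.

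Finally, I would establish minimality. If $\pp_i$ were rational for some $i < m$, then in particular $\pp_i$ would be separable, contradicting the fact from Proposition~\ref{2023_01_25_19:00} that $m$ is the smallest integer with $\pp_m$ separable. Hence $m$ is the smallest integer such that $\pp_m$ is rational, completing the proof. There is no real obstacle here beyond assembling the three previously established facts; the only point requiring care is to note explicitly that non-decomposedness passes from $\pp$ to each $\pp_n$, so that the separability established in Proposition~\ref{2023_01_25_19:00} can be promoted to rationality via Corollary~\ref{2023_20_02_16:05}.
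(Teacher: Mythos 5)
Your proof is correct and takes exactly the route the paper intends: the paper states this as an immediate specialization of Proposition~\ref{2023_01_25_19:00} (with no separate proof given), and your argument simply fills in the two-sentence check that non-decomposedness passes to each $\pp_n$ and upgrades ``separable'' to ``rational'' via Corollary~\ref{2023_20_02_16:05}.
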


\begin{rem}\label{2023_05_31_20:55}
Let $C|K$ denote the regular geometrically integral curve associated to the function field $F|K$.
\begin{enumerate}[(i)]
\item 
Over each non-decomposed prime $\pp$ there lies a unique closed point in the extended curve $\ov C | \ov K := C\otimes_K \ov K | \ov K$, 
i.e., there is a unique point $x\in \ov C$ that is mapped to $\pp$ under the natural morphism $\ov C \to C$. The geometric singularity degree $\de(\pp)$ of $\pp$ coincides with the $\delta$-invariant $\de(\ov C, x)$ of $\ov C$ at $x$ as defined in \cite[p.\,69]{IIL20}.

\item
By Proposition \ref{2023_02_06_13:00}, over each prime $\pp$ in $F|K$ there are exactly $[\ka(\pp):K]_{sep}$ (non-decomposed) primes in $K^{sep} F | K^{sep}$, each of singularity degree $\de(\pp)/[\ka(\pp):K]_{sep}$. 
In other words, for each prime $\pp$ there are precisely $[\ka(\pp):K]_{sep}$ closed points  $x_i \in \ov C$ ($1 \leq i \leq [\ka(\pp):K]_{sep}$) that are mapped to $\pp$ by the morphism $\ov C \to C$, each of $\delta$-invariant $\de(\ov C,x_i)=\de(\pp)/[\ka(\pp):K]_{sep}$.

\item
Since singularity degrees are divisible by $(p-1)/2$ (see Proposition~\ref{2022_07_17_12:05}) we deduce that the $\de$-invariants $\de(\ov C,x)$ of the curve $\ov C$ are all multiples of $(p-1)/2$. In particular, they cannot be strictly smaller than $(p-1)/2$ unless $C$ is smooth.
This provides a new proof of the smoothness criterion in \cite[Theorem~5.7]{IIL20}.

\item
We also deduce that the singularities of $\ov C$ are unibranch. In other words, over each singular point $x\in \ov C$ there lies a unique point on the normalization of $\ov C$.
\end{enumerate}
\end{rem}

We now address the question raised after Proposition~\ref{2023_01_25_19:00}.
Given a singular prime $\pp$ we ask for a specific integer $n$ such that the restriction $\pp_n$ is separable.
To get an answer we work with the partitions of the geometric singularity degree $\de(\pp)$ as the sum of the $K^{1/p}$-singularity degrees $\De_i$, as indicated in Proposition~\ref{2022_07_17_12:05}.

Let $d$ be a positive integer. We consider the partitions
\[ d = d_1 + \dots + d_s \]
of $d$ by positive integers $d_i$ satisfying
\[ d_{i+1} \leq p^{-1} d_i \; \text{ for each $i = 1,\dots,s-1$}. \]
We define
\[ \tau_p(d) := \max\{ s + \min \{ v_p(d_1), \dots, v_p(d_s)\} \}, \]
where the maximum is taken over all such partitions and $v_p(d_i)$ denotes the exponent of the largest $p$-power that divides $d_i$.

\begin{prop}\label{2021_09_01_15:39}
    Let $\pp$ be a singular prime of $F|K$.
    Then the restricted prime $\pp_n$ is separable for all $n \geq \tau_p \big( \frac{2}{p-1} \frac {\de(\pp)}{[\ka(\pp):K]_{sep}} \big) $.
\end{prop}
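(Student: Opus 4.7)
Let $e := [\ka(\pp):K]_{sep}$ and let $L$ denote the separable closure of $K$ in $\ka(\pp)$, so $[L:K] = e$. Put $d := \tfrac{2}{p-1}\tfrac{\de(\pp)}{e}$. My plan is first to reformulate ``$\pp_n$ separable'' as a purely numerical condition on $\deg(\pp_n)$, then to recognise the sequence of $K^{1/p}$-singularity degrees $(\De_i)$ as an admissible partition of $d$ in the combinatorial sense preceding the statement, and finally to extract the bound $n \geq \tau_p(d)$ from that partition.

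By Proposition~\ref{2022_01_20_15:00}, $\ka(\pp_n) = L$ for all sufficiently large $n$; since the residue fields form a descending chain, this forces $L \subseteq \ka(\pp_n)$ for every $n$. Writing $[\ka(\pp_n):L] = p^{m_n}$, I get $\deg(\pp_n) = e\,p^{m_n}$, the sequence $(m_n)_{n\geq 0}$ is non-increasing, and $\pp_n$ is separable if and only if $\ka(\pp_n)$ is separable over $K$, i.e.\ if and only if $\ka(\pp_n) \subseteq L$, i.e.\ iff $m_n = 0$. So the task reduces to proving $m_n = 0$ as soon as $n \geq \tau_p(d)$.

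Let $s \geq 1$ be the smallest index with $\De_s = 0$ (it exists because $\pp$ is singular and the $\De_i$ decrease by a factor of at least $p$ by Proposition~\ref{2024_07_14_19:10}). Set
\[ d_i \;:=\; \tfrac{2}{p-1}\tfrac{\De_{i-1}}{e}, \qquad i = 1, \dots, s. \]
Corollary~\ref{2023_02_23_14:20} together with Proposition~\ref{2024_07_14_19:10} makes each $d_i$ a positive integer; Proposition~\ref{2022_07_17_12:05} gives $d_1 + \cdots + d_s = d$; and Proposition~\ref{2024_07_14_19:10} also gives $d_{i+1} \leq p^{-1} d_i$. Hence $(d_1, \dots, d_s)$ is an admissible partition, and
\[ \tau_p(d) \;\geq\; s + \min\{v_p(d_1), \dots, v_p(d_s)\}. \]
For each $i \leq s$, the prime $\pp_{i-1}$ is singular (its $K^{1/p}$-singularity degree $\De_{i-1}$ is positive), so Proposition~\ref{2021_09_24_16:03} applied to $\pp_{i-1}$ gives $\deg(\pp_i) \mid \tfrac{2}{p-1}\De_{i-1}$; dividing by $e$ this reads $p^{m_i} \mid d_i$, i.e.\ $m_i \leq v_p(d_i)$. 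The monotonicity of $(m_n)$ then forces $m_s \leq \min_i v_p(d_i)$.

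To finish, $\pp_s$ is non-singular by Corollary~\ref{2022_07_17_12:55}, and the computation above shows $\log_p [\ka(\pp_s):K]_{insep} = m_s$; Proposition~\ref{2023_01_25_19:00} applied to $\pp_s$ then yields that $\pp_{s+m_s}$ is separable. Since $s + m_s \leq s + \min_i v_p(d_i) \leq \tau_p(d) \leq n$ and separability is inherited by $\ka(\pp_{n+1}) \subseteq \ka(\pp_n)$, the prime $\pp_n$ is separable. The main obstacle is the combinatorial step of identifying $(d_i)$ with the partition that appears in the definition of $\tau_p$ and seeing that the monotonicity of $(m_n)$ is precisely what upgrades the pointwise bound $m_i \leq v_p(d_i)$ to $m_s \leq \min_i v_p(d_i)$; everything else is assembled directly from the propositions preceding the statement.
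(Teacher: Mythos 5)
Your proof is correct and takes essentially the same route as the paper's: both extract the admissible partition $d_i = \tfrac{2}{p-1}\De_{i-1}/[\ka(\pp):K]_{sep}$, bound the inseparable exponent of $\ka(\pp_s)|K$ by $\min_i v_p(d_i)$ via Proposition~\ref{2021_09_24_16:03} together with the divisibility $\deg(\pp_s)\mid\deg(\pp_i)$, and finish with Proposition~\ref{2023_01_25_19:00}. You merely make the divisibility argument more explicit by introducing the non-increasing sequence $(m_n)$; the substance is identical.
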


Note that according to Corollary~\ref{2023_02_23_14:20} the integer $\frac 2 {p-1} \de(\pp)$ is divisible by $[\ka(\pp):K]_{sep}$, and so $\frac{2}{p-1} \frac {\de(\pp)}{[\ka(\pp):K]_{sep}}$ is indeed an integer.

\begin{proof}
    We take $d:=\frac{2}{p-1} \frac {\de(\pp)}{[\ka(\pp):K]_{sep}}$ and $d_i := \frac{2}{p-1} \frac {\De(\pp_{i-1})}{[\ka(\pp):K]_{sep}}$.
    Let $s$ be the largest integer such that $d_s>0$, that is, $\De(\pp_{s-1})\neq 0$ but $\De(\pp_s) = 0$, i.e., $\pp_{s-1}$ is singular but $\pp_s$ is non-singular.
    Let $m=\log_p [\ka(\pp_s):K]_{insep}$. By Proposition~\ref{2023_01_25_19:00}, the prime $\pp_{s+m}$ is separable.
    By Proposition~\ref{2021_09_24_16:03}, for each $i=1,\dots,s$ the degree $\deg (\pp_i)$ is a divisor of $[\ka(\pp):K]_{sep} \cdot d_i$.
    Because $\deg(\pp_s) = [\ka(\pp):K]_{sep} \cdot p^m$ is a divisor of each $\deg(\pp_i)$ we conclude
    $m\leq \min \{ v_p(d_1), \dots, v_p(d_s)\}$.
\end{proof}

To get the desired bound on $n$ so that $\pp_n$ is separable it remains to solve a combinatorics problem, namely, we must determine the precise value of $\tau_p(d)$. 
As this will depend on whether $d$ is a sum of consecutive $p$-powers
we introduce the following notation: for a pair of non-negative integers $j\leq i$ we write
\[ P^i_j := p^j + \dots + p^i = \sum_{r=j}^i p^r = \frac{p^{i+1}-p^j}{p-1}. \]
Note that for every positive integer $i$ the following inequalities hold
\[ P_0^{i-1} < P_i^i < P_{i-1}^i < \cdots < P_0^i. \]

\begin{prop}\label{2022_03_03_17:40}
    Let $d$ be a positive integer.
    If 
    $ P_0^{i-1} \leq d \leq P_0^i $,
    then
    \[ \tau_p(d) = 
    \begin{cases}
        i+1 & \text{if $d=P^i_j$ for some $j \leq i$,} \\ 
        i & \text{otherwise}. \\
    \end{cases} 
    \]
    Equivalently,
    \[ \tau_p(d) = 
        \begin{cases}
            \lc \log_p ((p-1) d + 1) \rc & \text{if $d$ is a sum of consecutive $p$-powers,} \\
            \lc \log_p ((p-1) d + 1) \rc - 1 & \text{otherwise.}
        \end{cases} 
    \]
\end{prop}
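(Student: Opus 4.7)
My plan is to prove the explicit case-split form of the proposition first, and then deduce the logarithmic reformulation at the end. The argument naturally breaks into exhibiting partitions that witness the lower bounds on $\tau_p(d)$, and then bounding $\tau_p(d)$ from above by analyzing arbitrary admissible partitions.

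For the lower bounds I would display two explicit partitions. When $d = P_j^i$, the geometric partition $d_k := p^{i+1-k}$ for $k = 1,\dots,i-j+1$ has successive ratios exactly $1/p$, so $s = i-j+1$ parts with $\min_k v_p(d_k) = j$, witnessing $\tau_p(d) \geq (i-j+1)+j = i+1$. For arbitrary $d$ in the range (and $i \geq 2$; the cases $i \leq 1$ are trivial), the partition $(d - P_0^{i-2},\, p^{i-2},\, p^{i-3},\, \dots,\, 1)$ of $d$ into $i$ parts satisfies all ratio constraints because the hypothesis $d \geq P_0^{i-1}$ is precisely what makes $d - P_0^{i-2} \geq p^{i-1}$. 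Since the last part equals $1$, we get $\min v_p = 0$, so $\tau_p(d) \geq i$.

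For the upper bound I would take any admissible partition with $m := \min_k v_p(d_k)$ and write $d_k = p^m e_k$ with positive integers $e_k$ still satisfying $e_{k+1} \leq e_k/p$. This forces $e_k \geq p^{s-k}$, hence
\[ d = p^m \sum_{k=1}^s e_k \;\geq\; p^m P_0^{s-1} \;=\; P_m^{\,s+m-1}. \]
Were $s + m \geq i+2$, this would give $d \geq P_m^{i+1} > P_0^i \geq d$, the strict inequality coming from $p^{i+1} > P_0^{m-1}$ (equivalent to $p^{m+1} > 2p^m - 1$). Hence $\tau_p(d) \leq i+1$.

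The delicate step, and the main obstacle, is showing that $\tau_p(d) = i+1$ actually forces $d = P_j^i$. Assume $s + m = i+1$, so $d \geq P_m^i$; since $d$ is divisible by $p^m$, the only other candidate value would be $d \geq P_m^i + p^m$, but the key arithmetic inequality $p^m > P_0^{m-1} = P_0^i - P_m^i$ (valid for every $m \geq 0$, with $P_0^{-1} := 0$) shows $P_m^i + p^m > P_0^i$, contradicting $d \leq P_0^i$. Thus $d = P_m^i$ is of the desired form. Finally, the logarithmic reformulation drops out from a short check: $(p-1)d + 1 = p^{i+1} - (p^j - 1)$ lies in the interval $(p^i, p^{i+1}]$, equaling a power of $p$ only when $d \in \{P_0^{i-1}, P_0^i\}$, both sums of consecutive $p$-powers. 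Combining this with the case-split formula, one verifies the two-part expression in terms of $\lceil \log_p((p-1)d+1)\rceil$ in each of the three subcases ($d = P_j^i$, $d = P_0^{i-1}$, and $d$ in between but not of these forms).
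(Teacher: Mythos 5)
Your proof is correct, and it takes a genuinely different route from the paper. The witnessing partitions for the lower bounds are the same (your $(d - P_0^{i-2},\, p^{i-2},\, \dots,\, 1)$ is exactly the paper's $((d-P_0^{i-1})+p^{i-1},\, p^{i-2},\,\dots,\,1)$). The upper bound, however, is where the two arguments diverge. The paper proceeds by induction on $i$: given a partition, it applies the induction hypothesis to the tail $d_2 + \dots + d_s \leq P_0^{i-1}$, and in the case where the tail turns out to be the geometric sequence $(p^{i-1},\dots,p^{i+1-s})$, it checks directly that $d_1$ cannot be a multiple of $p^{i+1-s}$. Your argument instead extracts the common power $p^m$ from all parts and observes that the reduced parts $e_k$ must satisfy $e_k \geq p^{s-k}$, yielding the clean inequality $d \geq P_m^{s+m-1}$. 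From this both the bound $s + m \leq i+1$ and the rigidity statement (equality forces $d = P_m^i$, and indeed forces the partition to be geometric) drop out at once via elementary comparisons of the quantities $P_j^i$. This avoids induction altogether, is shorter, and as a bonus identifies the unique extremal partition. A small expository blemish: the parenthetical ``(equivalent to $p^{m+1} > 2p^m - 1$)'' actually justifies the later inequality $p^m > P_0^{m-1}$, not the one it is attached to, namely $p^{i+1} > P_0^{m-1}$; the latter follows from $m \leq i$ (which holds since $p^m \mid d \leq P_0^i < p^{i+1}$) together with $P_0^{m-1} < p^m$. The verification of the logarithmic reformulation is also phrased for $d = P_j^i$ but must be carried out for arbitrary $d$ in $[P_0^{i-1}, P_0^i]$; the content is right, just state that $(p-1)d+1 \in [p^i, p^{i+1}]$ with the left endpoint hit exactly at $d = P_0^{i-1}$.
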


A straightforward consequence is the identity
\[ \tau_p(pd) = \tau_p(d) + 1. \]

\begin{proof}
The partition $d = ((d-P^{i-1}_0) + p^{i-1}) + p^{i-2} + \dots + 1$ shows that $\tau_p(d) \geq i$ whenever $d \geq P_0^{i-1}$. Moreover, if $d = P_j^i$ for some $j \leq i$ then $\tau_p(d) > i$, as follows from the partition $d = p^i + \dots + p^j$. 
Thus it suffices to show that
if $d \leq P_0^i$ then any partition $d = d_1 + \dots + d_s$ 
with $(d_1,\dots,d_s) \neq (p^{i},\dots, p^{i+1-s})$ satisfies 
\[ s + \min\{v_p(d_1),\dots,v_p(d_s)\} \leq i. \]
We argue by induction on $i$. The base case $i=1$ is clear, so we assume $i>1$. We may suppose that $s>1$; indeed, if $s=1$ then $v_p(d_1)<i$ because $d_1 \leq P_0^i$ and $d_1 \neq p^i$. 
As  
\[ d_2 + \dots + d_s \leq \frac {d_1} {p} + \dots + \frac{d_1} {p^{s-1}} < \frac{d_1} {p-1} = \frac{d-d_2- \dots -d_s}{p-1}, \] 
hence $d_2 + \dots + d_s  < p^{-1} d \leq p^{-1} P_0^i = p^{-1} + P_0^{i-1}$ and therefore 
$ d_2 + \dots + d_s \leq P_0^{i-1}$, 
it follows from the induction hypothesis that either $(d_2,\dots,d_s) = (p^{i-1},\dots, p^{i+1-s})$ or
\[ s - 1 + \min \{ v_p(d_2),\dots,v_p(d_s) \} \leq i-1. \]
In the second case the claim follows. In the first case it remains to show that $d_1$ is not a multiple of $p^{i+1-s}$. This holds because on the one hand $d_1 \geq  p d_2 = p^i, d_1 \neq p^i$  and therefore $d_1 - p^i > 0$, while on the other hand 
\[ d_1 - p^i = d - p^i - d_2 - \dots - d_s = d - P_{i+1-s}^i \leq P_0^i -P_{i+1-s}^i = p^0 + \dots + p^{i-s} < p^{i-s+1}. \qedhere \]
\end{proof}

A combination of Propositions~\ref{2021_09_01_15:39} and~\ref{2022_03_03_17:40} yields the desired bound on $n$ so that the prime $\pp_n$ is separable.
This depends on the characteristic $p>0$ and on the geometric singularity degree $\de(\pp)$ of the singular prime $\pp$.
In particular, when $\pp$ is non-decomposed we obtain a bound on $n$ so that $\pp_n$ is rational, thus answering the question raised before Proposition~\ref{2022_07_17_12:05}.

\begin{thm}\label{2022_03_04_12:45}
    Let $F|K$ be a one-dimensional separable function field of characteristic $p>0$.
    For a singular prime $\pp$ the following assertions hold.
    \begin{enumerate}[\upshape (i)]
        \item 
        The restriction $\pp_n$ of $\pp$ to the $n$th Frobenius pullback $F_n|K = F^{p^n}{\cdot}K|K$ is separable for all $n \geq \log_p \big( 2 \, \frac{\de(\pp)}{[\ka(\pp):K]_{sep}} + 1 \big)$; moreover, if the integer $\frac 2{p-1} \frac{\de(\pp)}{[\ka(\pp):K]_{sep}}$ is not a sum of consecutive $p$-powers then $\pp_n$ is separable for all 
        $n \geq \log_p \big( 2 \, \frac{\de(\pp)}{[\ka(\pp):K]_{sep}} + 1 \big) - 1$.
        \item \label{2023_04_03_23:55}
        Assume in addition that the prime $\pp$ is non-decomposed. 
        Then $\pp_n$ is rational for all $n \geq \log_p \big( 2  {\de(\pp)} + 1 \big)$; moreover, if the integer $\frac 2{p-1} {\de(\pp)}$ is not a sum of consecutive $p$-powers then $\pp_n$ is rational for all 
        $n \geq \log_p \big( 2{\de(\pp)} + 1 \big) - 1$.
    \end{enumerate}
\end{thm}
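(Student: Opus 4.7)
The plan is to reduce the theorem to two ingredients already in place: the bound $\tau_p(d)$ from Proposition~\ref{2021_09_01_15:39} and the explicit evaluation of $\tau_p$ given by Proposition~\ref{2022_03_03_17:40}. Both clauses of the statement will be obtained by a substitution and a short bookkeeping of the ceiling function, while clause (ii) will be deduced from clause (i) by specializing to the non-decomposed case.

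For part (i), I set $d := \frac{2}{p-1}\frac{\de(\pp)}{[\ka(\pp):K]_{sep}}$, which is a positive integer by Corollary~\ref{2023_02_23_14:20}. Proposition~\ref{2021_09_01_15:39} then tells me that $\pp_n$ is separable whenever $n \geq \tau_p(d)$, and Proposition~\ref{2022_03_03_17:40} computes
\[ \tau_p(d) = \lc \log_p((p-1)d+1) \rc - \epsilon, \]
where $\epsilon \in \{0,1\}$ equals $0$ precisely when $d$ is a sum of consecutive $p$-powers. Since $(p-1)d+1 = 2\de(\pp)/[\ka(\pp):K]_{sep}+1$, this translates immediately into the bound claimed in (i), once I note that for an integer $n$ the inequality $n \geq \log_p((p-1)d+1)$ is equivalent to $n \geq \lc \log_p((p-1)d+1) \rc$.

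For part (ii), I use Corollary~\ref{2023_03_20_10:40} to see that the residue field extension of a non-decomposed prime is purely inseparable, so $[\ka(\pp):K]_{sep}=1$; substituting into (i) gives the separability bound $n \geq \log_p(2\de(\pp)+1)$ (or one less in the improved case). To upgrade separability to rationality I observe that each restriction $\pp_n$ inherits the non-decomposed property from $\pp$, equivalently by Corollary~\ref{2023_06_18_11:05} the separable degree of the residue field extension does not change under restriction, and then I apply Corollary~\ref{2023_20_02_16:05}, according to which a separable non-decomposed prime is rational.

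As all of the genuine work has been carried out in Propositions~\ref{2021_09_01_15:39} and~\ref{2022_03_03_17:40}, no substantial obstacle remains. The only mildly delicate point is the bookkeeping of the ceiling, together with the observation that the improvement by $1$ in both (i) and (ii) is triggered exactly by the condition that $\frac{2}{p-1}\de(\pp)/[\ka(\pp):K]_{sep}$ (respectively $\frac{2}{p-1}\de(\pp)$) fails to be a sum of consecutive $p$-powers, matching the two cases of Proposition~\ref{2022_03_03_17:40}.
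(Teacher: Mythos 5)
Your proof is correct and follows precisely the route the paper intends: it substitutes $d=\tfrac{2}{p-1}\,\tfrac{\de(\pp)}{[\ka(\pp):K]_{sep}}$ into Proposition~\ref{2021_09_01_15:39}, evaluates $\tau_p(d)$ via Proposition~\ref{2022_03_03_17:40}, handles the ceiling correctly, and specializes to the non-decomposed case through Corollaries~\ref{2023_03_20_10:40} and~\ref{2023_20_02_16:05}. The paper itself presents the theorem as an immediate "combination" of those two propositions, so your write-up supplies exactly the omitted bookkeeping.
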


In the special case where $p>2$ and $\de(\pp) = p(p-1)/2$, the bound in~\ref{2023_04_03_23:55} is equal to $2$. This was obtained previously by Salomão \cite[Corollary~3.3]{Sal11}.

Let us look at the simplest example of the situation we are discussing.
Let $F|K$ be quasi-elliptic, i.e., suppose that $F|K$ and its extension $\ov K F | \ov K$ have genera $g=1$ and $\ov g = 0$.
By the genus change formula~\eqref{2022_03_04_18:10} there is a unique singular prime $\pp$, which has $\de(\pp)=1$ and $\De_0 = 1$, $\De_1=0$. 
In particular $p\leq 3$ (see Proposition~\ref{2022_07_17_12:05}). 
Also, the restricted prime $\pp_1$ is non-singular and the first Frobenius pullback $F_1|K$ is conservative of genus $g_1=0$.
By Corollary~\ref{2022_03_04_16:40} the prime $\pp$ is non-decomposed. 
Then Theorem~\ref{2022_03_04_12:45} implies that the restricted prime $\pp_2$ (resp. $\pp_1$) is rational if $p=2$ (resp. $p=3$),
and in turn the Frobenius pullback $F_2|K$ (resp. $F_1|K$) is a rational function field.
As explained in Section~\ref{2024_07_01_23:55}, it is then possible to add generators to $F_2$ (resp. $F_1$) through the Bedoya--St\"ohr algorithm to obtain a presentation of $F|K$, thus recovering Queen's characterization of quasi-elliptic function fields \cite{Queen71} (see \cite[Section~2]{HiSt23} for details).

For non-hyperelliptic function fields $F|K$ of genera $g=3$, $\ov g = 0$ in characteristic $p=2$ one can show that the first Frobenius pullbacks $F_1|K$ are quasi-elliptic.
Then the addition of a generator leads to a full characterization of these function fields (see \cite{HiSt23,HiSt24}).

\medskip

In the remaining of this section we show that the bound in Theorem~\ref{2022_03_04_12:45}~\ref{2023_04_03_23:55} is sharp in characteristic $p=2$.
In characteristic $p>2$, however, an analogous statement is false (see \cite{Hi24}).

\begin{prop}\label{2022_07_08_15:40}
    The bound provided by Theorem~\ref{2022_03_04_12:45}~\ref{2023_04_03_23:55} is sharp in characteristic $p=2$.
\end{prop}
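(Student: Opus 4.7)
The plan is to exhibit, for every positive integer $d$, an explicit one-dimensional separable function field $F|K$ in characteristic $p=2$ equipped with a non-decomposed singular prime $\pp$ of geometric $\delta$-invariant $d$ that attains the bound of Theorem~\ref{2022_03_04_12:45}~\ref{2023_04_03_23:55}. By the proof of Proposition~\ref{2021_09_01_15:39}, this amounts to realizing simultaneously (i) a sequence $(\De_0, \dots, \De_{s-1})$ of positive integers satisfying $\De_{k+1}\le \De_k/2$ and summing to $d$ for which the associated partition $2d = 2\De_0 + \dots + 2\De_{s-1}$ attains the maximum $\tau_2(2d)$ of Proposition~\ref{2022_03_03_17:40}; and (ii) a residue-field chain $K = \ka(\pp_n) \subsetneq \ka(\pp_{n-1}) \subsetneq \dots$ of length $m = \min_k v_2(2\De_k)$, so that $n = s+m = \tau_2(2d)$. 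A direct analysis of Proposition~\ref{2022_03_03_17:40} shows that such a sequence $(\De_k)$ always exists in characteristic $2$, even under the additional parity constraint $2\mid 2\De_k$ imposed by the application.

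The construction proceeds bottom-up following the recipe outlined after Theorem~\ref{2023_05_31_19:35}. Choose the imperfect base field $K = \F_2(a_1, \dots, a_n)$, and start from the rational function field $F_n = K(x)$ equipped with its rational prime $\pp_n$ at $x=0$. Inductively form the ascending tower of purely inseparable degree-$2$ extensions
\[ F_n \su F_{n-1} \su \cdots \su F_0 = F, \qquad F_{k-1} = F_k\big(\sqrt{u_{k-1}}\big), \]
where the generators $u_{k-1}\in F_k$ are chosen via the Riemann--Roch theorem so as to exhibit a prescribed local expansion at $\pp_k$. For the upper $m$ steps ($s < k \le n$), $u_{k-1}$ is a unit at $\pp_k$ whose residue is a new non-square $a_{n-k+1}\in K$, so that $\sqrt{a_{n-k+1}}$ is adjoined to the residue field without creating any singularity. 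For the lower $s$ steps ($1\le k\le s$), $u_{k-1}$ has a local expansion of shape $c_{k-1}^2 + x_k^{2\De_{k-1}+1}\cdot(\text{unit})$ at $\pp_k$ with $c_{k-1}\in\ka(\pp_k)$; the Bedoya--St\"ohr algorithm then guarantees $\De(\pp_{k-1}) = \De_{k-1}$ without enlarging $\ka(\pp_{k-1})$.

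By Proposition~\ref{2022_07_17_12:05} the resulting pair $(F|K, \pp)$ satisfies $\de(\pp) = \De_0 + \dots + \De_{s-1} = d$. The prime $\pp$ is non-decomposed because purely inseparable base-changes do not split primes, so $\pp_n$ rational forces $\pp$ non-decomposed via Corollary~\ref{2023_03_20_10:40}. Finally $\sqrt{a_1}\in \ka(\pp_{n-1})\setminus K$, so $\pp_{n-1}$ is not rational, confirming sharpness. The main obstacle will be the stepwise local verification that the Bedoya--St\"ohr algorithm really delivers $\De(\pp_{k-1}) = \De_{k-1}$ at each descent step and not a smaller value: an unfortunate choice of $u_{k-1}$ can produce accidental cancellations in the Laurent expansion at $\pp_{k-1}$ that collapse the $\De$-jump, spoiling either the total $\de(\pp)=d$ or the predicted residue-field ascent. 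This verification reduces to a careful conductor-ideal computation in $K^{1/2}\cdot\OO_{\pp_{k-1}} \su \OO_{\pp^{(1)}_{k-1}}$, implemented in prototype form for $d=3$ by the pencil of rational quartics treated in Section~4 and, in related low-genus cases, in \cite{HiSt23, HiSt24}.
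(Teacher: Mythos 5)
Your proposal captures the general philosophy behind the paper's construction --- build the pair $(F|K,\pp)$ bottom-up by starting from a rational prime $\pp_n$ on a rational function field and forming an ascending length-$n$ tower of purely inseparable degree-$2$ extensions --- and it correctly identifies what sharpness requires: a partition of $2d$ by the $2\De_k$ attaining $\tau_2(2d)$, together with a residue-field chain of the complementary length $m$. But the proof stops exactly where the real work begins. You explicitly flag as ``the main obstacle'' the stepwise local verification that the Bedoya--St\"ohr algorithm delivers the prescribed jumps $\De(\pp_{k-1}) = \De_{k-1}$ rather than smaller ones, and you defer this to an unexecuted conductor-ideal computation. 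You also assert without justification that ``a direct analysis of Proposition~\ref{2022_03_03_17:40} shows that such a sequence $(\De_k)$ always exists.'' Finally, the parity constraint ``$2\mid 2\De_k$'' you invoke is vacuous and plays no role. Together these leave the central existence claim of the proposition unproven.

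The paper sidesteps the need for a general Riemann--Roch existence argument by exhibiting two fully explicit families of function fields and computing their invariants directly. Example~\ref{2022_07_09_18:00} takes $F|K=K(y,u)|K$ with $(a + z^{2^{j+1}})z + y^{2^{i-j}} = 0$ and $z = u^2 + y^{1+2\ell}$, where $a\in K\setminus K^2$; this realizes $\de(\pp) = P_j^i + \ell\cdot 2^{j+1}$ for $i>j\ge 0$, $\ell\ge 0$, with $\pp_{i+2}$ rational and $\pp_{i+1}$ non-rational. Example~\ref{2022_07_09_18:05} covers $\de(\pp)=2^i=P^i_i$ via $y^2 = (a+z^{2^{i+1}})z$. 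In both cases the iterated Frobenius pullbacks are written down explicitly, local parameters and differentials are tracked, and the formula $\de(\pp_n) = 2\de(\pp_{n+1}) + \tfrac12 v_{\pp_{i+2}}(d(\cdot))$ from \cite{BedSt87} pins down the exact $\de$-values and residue-field degrees. The proof of the proposition then reduces to an elementary arithmetic check: every $d$ strictly between $P_0^{i-1}$ and $P_0^i$ that is not of the form $P_j^i$ can be written as $P_j^{i-1}+\ell\cdot 2^{j+1}$ for some $j<i-1$ and $\ell\ge 0$. To turn your sketch into a proof you would need to supply the generators $u_{k-1}$ and carry out the same kind of local computation --- at which point you would essentially be reconstructing the paper's two examples.
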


In order to prove the proposition, we must construct for every positive integer $d$ a non-decomposed prime $\pp$ of geometric singularity degree $\de(\pp)=d$ whose restriction $\pp_{n-1}$ is non-rational,
where $n$ is the smallest integer allowed by the bound in Theorem~\ref{2022_03_04_12:45}~\ref{2023_04_03_23:55}, i.e.,
\[ n = \tau_2(2d) = \tau_2(d) + 1
 = \begin{cases}
    i+2 & \text{if $d=P^i_j$ for some $j \leq i$,} \\
    i+1 & \text{if $P^{i-1}_0 < d < P^i_0$ and $d \neq P^i_j$ for all $j\leq i$.}
\end{cases}  
\]
In Example~\ref{2022_07_09_18:00} below we build for every $i > j \geq 0$ and every $\ell\geq 0$ a non-decomposed prime $\pp$ of geometric singularity degree
\[ \de(\pp) = P_{j}^{i} + \ell \cdot 2^{j+1} \]  
with the property that $\pp_{i+1}$ and $\pp_{i+2}$ are non-rational and rational respectively.
Similarly, in Example~\ref{2022_07_09_18:05} we construct for every $i \geq 0$ a non-decomposed prime $\pp$ of geometric singularity degree
\[ \de(\pp) = 2^i = P^i_i \]
with the property that $\pp_{i+1}$ and $\pp_{i+2}$ are non-rational and rational respectively.
Before getting to the examples themselves we show how the proposition is obtained from them.

\begin{proof}[Proof of Proposition~\ref{2022_07_08_15:40}]
In view of the two examples,
it is enough to show that if $d$ is a positive integer such that $P^{i-1}_0 < d < P^i_0$ and $d\neq P^i_j$ for all $j\leq i$, 
then it can be written as $d=P_j^{i-1} + \ell \cdot 2^{j+1}$ for some $j < i-1$ and some $\ell\geq 0$.
Indeed, if this were not the case then
\[ d\not\equiv 2^{j} \pmod {2^{j+1}} \quad \text{for each $j=0,\dots,i-2$,} \]
which means $d\equiv 0 \pmod {2^{i-1}}$, and therefore $d \in \{ P^i_{i}, P^i_{i-1}\}$ as $P^{i-1}_0 < d < P^i_0$, a contradiction.
\end{proof}

\begin{example}\label{2022_07_09_18:00}
Let $i>j \geq 0$ and $\ell\geq0$.
We construct a non-decomposed prime $\pp$ of geometric singularity degree
\[ \de(\pp) = P_{j}^{i} + \ell \cdot 2^{j+1} \]  
with the property that $\pp_{i+1}$ and $\pp_{i+2}$ are non-rational and rational respectively.
Consider the function field $F|K=K(y,u)|K$ in characteristic $p=2$ given by the following relation
    \[ (a + z^{2^{j+1}}) z + y^{2^{i-j}} = 0, \]
    where $z:= u^2 + y^{1+2\ell}$ and $a\in K\setminus K^2$.
    Then $y^{2^{i-j}} = (a + z^{2^{j+1}}) z$ and $u^2 = z + y^{1+2\ell}$,
    whence the Frobenius pullbacks of $F|K$ take the form
    \[ F_n|K =
    \begin{cases}
        K(y,u)|K & \text{if $n=0$}, \\        
        K(z,y^{2^{n-1}})|K & \text{if $0 < n < i-j+1$}, \\
        K(z)|K & \text{if $n=i-j+1$}.
    \end{cases}
    \]
    Let $\pp$ be the zero of the function $z^{2^{j+1}}+a$, i.e., let $\pp$ be the only prime of $F|K$ such that $v_\pp(z^{2^{j+1}}+a)>0$.
    Then the restricted prime $\pp_{i-j+1}$ is the $(z^{2^{j+1}}+a)$-adic prime of the rational function field $F_{i-j+1}|K=K(z)|K$, i.e., $v_{\pp_{i-j+1}}(z^{2^{j+1}}+a)=1$, 
    hence it is non-singular with residue field $\ka(\pp_{i-j+1}) = K(a^{1/2^{j+1}})$ and degree $\deg(\pp_{i-j+1}) = 2^{j+1}$.
    By Corollary~\ref{2023_03_20_10:40} this prime is non-decomposed, and 
    by Proposition~\ref{2021_09_25_17:04} its restrictions $\pp_{i+1}$ and $\pp_{i+2}$ are non-rational and rational respectively.
    The function $x:=z^{2^{j+1}}+a \in F_{i+2}$ satisfies $F_{i+2}=K(x)$, and moreover it is a local parameter at the rational prime $\pp_{i+2}$.
    
    We compute the geometric singularity degree of the non-decomposed prime $\pp$ by applying the algorithm developed in \cite{BedSt87}.
    Because $y\in F_1$ and $y^{2^{i-j}} = x z \in F_{i-j+1}$ is a local parameter at the prime $\pp_{i-j+1}$, for every $0 < n < i-j+1$ the prime $\pp_n$ is ramified over $F_{n+1}$, i.e., $\deg(\pp_n)=\deg(\pp_{n+1})$, 
    and the function $y^{2^{n-1}} \in F_n$ is a local parameter at $\pp_n$. 
    As the differential $d(y^{2^{n-1}})^{2^{i+2-n}} = dy^{2^{i+1}}= x^{2^{j+1}}dx$ of $F_{i+2}|K=K(x)|K$ has order $2^{j+1}$ at $\pp_{i+2}$, this implies by \cite[Theorem~2.3]{BedSt87} that
    \[ \de(\pp_n) = 2 \de(\pp_{n+1})  + \tfrac12 v_{\pp_{i+2}} (dy^{2^{i+1}}) = 2 \de(\pp_{n+1}) + 2^{j} \qquad (0 < n < i-j+1). \]
    Note now that the residue classes $z(\pp),y(\pp), u(\pp) \in \ka(\pp)$ satisfy $y(\pp) = 0$, $z(\pp) = a^{1/2^{j+1}}$, $u(\pp)^2 = z(\pp)$, and $\ka(\pp_1)=K(z(\pp))$.
    Since $u(\pp) = z(\pp)^{1/2}$ 
    does not lie in $\ka({\pp_1})$ the prime $\pp$ is unramified over $F_1$, so it follows from \cite[Theorem~2.3]{BedSt87} that
    \[ \de(\pp) = 2 \de(\pp_1) + \tfrac12 v_{\pp_{i+2}} (du^{2^{i+2}}) = 2\de(\pp_1) + 2^{j} + \ell \cdot 2^{j+1}, \]
    where the last equality is due to the fact that the differential $du^{2^{i+2}} = x ^{2^{j+1}(1+2\ell)} (a + x)^{2\ell} dx$ of $F_{i+2}|K$ has order $2^{j+1}(1+2\ell)$ at $\pp_{i+2}$.
    This shows that $\pp$ has the desired geometric singularity degree.
\end{example}

\begin{example}\label{2022_07_09_18:05}
Let $i \geq 0$. 
We construct a non-decomposed prime $\pp$ of geometric singularity degree
$ \de(\pp) = 2^i$,
with the property that $\pp_{i+1}$ and $\pp_{i+2}$ are non-rational and rational respectively.
Let $F|K = K(z,y)|K$ be the function field in characteristic $p=2$ defined by the equation
\[ y^{2} = (a+z^{2^{i+1}}) z,  \]
where $a\in K\setminus K^2$.
The first Frobenius pullback is equal to
\[ F_1|K = K(z)|K. \]
Let $\pp$ be the zero of the function $z^{2^{i+1}}+a$, so that its restriction $\pp_1$ is the $(z^{2^{i+1}}+a)$-adic prime of the rational function field $F_1|K=K(z)|K$, i.e., $v_{\pp_1}(z^{2^{i+1}}+a)=1$.
This implies that $\pp_1$ is a non-singular prime of degree $\deg(\pp_1) = 2^{i+1}$
and that the primes $\pp_{i+2}$ and $\pp_{i+1}$ are rational and non-rational respectively.
As $y^2 = xz$ is a local parameter at $\pp_1$ we conclude
$ \de(\pp) = 2 \de(\pp_{1}) + \frac12 v_{\pp_{i+2}} (dy^{2^{i+2}}) = 2^{i}. $
\end{example}

\section{A pencil of singular quartics in characteristic 2}\label{2022_05_06_17:00}
In this section we study the geometry of a fibration by singular rational plane projective quartics over the projective line in characteristic $2$.
The generic fibre of this fibration has a singular non-decomposed prime $\pp$ of geometric singularity degree $\de(\pp) = 3$, with the property that its restriction $\pp_2$ to the second Frobenius pullback is non-rational.
This means that the generic fibre attains the bound provided by Theorem~\ref{2022_03_04_12:45}~\ref{2023_04_03_23:55} for $p=2$ and $\de(\pp)=3$.
We determine as well the minimal regular model of the fibration.

\medskip
Let $k$ be an algebraically closed field of characteristic $p=2$.
Consider the integral
projective algebraic surface over $k$
\[ S \su \PP^2\times \PP^1 \]
cut out by the bihomogeneous polynomial equation
\begin{equation}\label{2022_05_04_16:10}
    T_0(Z^4 + X^2 Y^2 + X^3 Z) + T_1 (Y^4 + X^2 Z^2)=0,
\end{equation}
where $X,Y,Z$ and $T_0,T_1$ represent the homogenous coordinates of $\PP^2$ and $\PP^1$ respectively.
The surface $S$ has a unique singular point, namely $P=((1:0:0),(0:1))$, as follows from the Jacobian criterion.
The second projection
\[ \phi:S\tto \PP^1, \]
which is proper and flat \cite[Chapter~III, Proposition~9.7]{Har77}, 
yields a fibration by plane projective quartic curves over $\PP^1$.
The fibre over each point of the form $(1:c)$ in $\PP^1$ is isomorphic to the plane projective quartic curve $S_c$ cut out by the equation
\[ Z^4 + X^2 Y^2 + X^3 Z + c (Y^4 + X^2 Z^2) = 0, \]
which has a unique singular point at
\[ P_c:=(0:1:c^{1/4}). \]
This curve is rational and integral, 
and its arithmetic genus is equal to $3$, as follows from the genus-degree formula for plane curves.
The singular point $P_c$ is unibranch of singularity degree $3$ and multiplicity~$2$ (if $c^3\neq 1$) or~$3$ (if $c^3=1$),
and its tangent line 
cuts the quartic curve only at $P_c$.
The quartic curve is \emph{strange}, that is,
all its tangent lines pass through the unique common point $(0:1:0)$.
If $c=0$, then this point coincides with the singular point $P_c$, and so each tangent line at a non-singular point intersects the curve at two points but is not a bitangent. 
In the opposite case $c\neq 0$, every such tangent line is a bitangent.

In analogy to the theory of elliptic curves, we note that the curve $S_c$ is \emph{homogeneous}, that is, for any two non-singular points there is an automorphism that maps the first point to the second one. 
Indeed, given a non-singular point $(x_0:y_0:z_0)\in S_c$, the projective transformation
\[ (x:y:z) \mapstto (x_0 x : x_0 y + y_0 x : x_0 z + z_0 x) \]
defines an automorphism of $S_c$ mapping $(x_0:y_0:z_0)$ to the point $(1:0:0)$.

Over the point $(0:1)$ of the base $\PP^1$ the fibre of $\phi$ degenerates to the non-reduced curve
\[ (Y^2 + X Z)^2 = 0. \]
This is the \textit{bad fibre} of the fibration in the sense that its behaviour differs from the generic behaviour of the fibres.

The fibration $\phi : S \to \PP^1$ has a section, namely the horizontal curve $(1:0:0) \times \PP^1 \su S$.
Also, the \emph{non-smooth locus}, which comprises the singular points $((0:1:c^{1/4}),(1:c))$ on the fibres, is the curve in $\PP^2 \times \PP^1$ defined by the bihomogeneous polynomial equations
\[ X=0 \quad \text{and} \quad T_0 Z^4 + T_1 Y^4 = 0.   \]
This is a rational curve, which cuts the bad fibre $\phi^{-1}(0:1)$ at the point $((0:0:1),(0:1))$.
It is mapped onto the base $\PP^1$ according to $(y:z) \mapsto (y^4:z^4)$, and so it is a purely inseparable cover of degree $4$ of $\PP^1$. (This will also follow from Proposition~\ref{2023_01_30_21:00} below.)

The generic fibre $C$ of the fibration $\phi:S\to \PP^1$ is the quartic curve over the function field $K = k(t) := k(T_1/T_0)$ of the base $\PP^1$ defined by the homogeneous equation~\eqref{2022_05_04_16:10}.
Its function field $F:=K(C)$ coincides with the function field $k(S)$ of the total space $S$. Dehomogenizing $X \mapsto 1$ and $T_0 \mapsto 1$ in equation~\eqref{2022_05_04_16:10} we obtain 
\[  F = k(S) = k(t,y,z) = K(y,z) \]
where the affine coordinate functions $t$, $y$ and $z$ of the surface $S$ satisfy the equation
\begin{equation}\label{2022_03_04_18:15}
    (z^4 + y^2 + z) + t (y^4 + z^2) = 0.
\end{equation}
The function field $F|K=K(y,z)|K$ of $C$ is therefore generated over $K=k(t)$ by the functions $y$ and $z$, which satisfy equation~\eqref{2022_03_04_18:15}.
The following proposition lists some properties of the generic fibre $C$ and its singular primes.

\begin{prop}\label{2023_01_30_21:00}
The regular curve $C$ has arithmetic genus $h^1(C,\OO_C) = 3$. 
The genus of the normalization of its extension $C\otimes_K \overline{K}$ is equal to zero. 
Furthermore, there is a unique singular prime $\pp$ in $C$, which is non-decomposed and satisfies
\begin{enumerate}[\upshape (i)]
    \item $\delta(\pp)=3$, $\delta(\pp_1)=1$, and $\delta(\pp_n)=0$ for each $n \geq 2$; 
    \item $\deg(\pp)=4$, $\deg(\pp_1)=\deg(\pp_2)=2$, and $\deg(\pp_n)=1$ for each $n \geq 3$.
\end{enumerate}
\end{prop}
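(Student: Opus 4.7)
The plan is to first locate the unique non-smooth prime of $C$ via the Jacobian criterion, then show directly that the geometric genus drops to $\overline{g} = 0$ via an explicit Artin--Schreier simplification, and finally use the structure of the Frobenius tower to compute the remaining invariants. For the first step, the arithmetic genus $g = 3$ is immediate from the genus--degree formula. The defining polynomial $f = z^4 + y^2 + z + t(y^4 + z^2)$ in the chart $X=1$ has $\partial f/\partial z = 1$ identically in characteristic $2$, so $C$ is smooth there. In the complementary chart $Y=1$, the equation reduces modulo $u = X/Y$ to $w^4 + t$, which is irreducible over $K = k(t)$. This yields a single candidate prime $\pp$ with $\kappa(\pp) = K(t^{1/4})$; the identity $w^4 + t = u^2 (1 + uw + tw^2)$ derived from the defining equation shows $w^4 + t \in (u^2)\,\OO_\pp$, so $\OO_\pp$ is a DVR with uniformizer $u$, $C$ is regular, and $\pp$ is its unique non-smooth prime. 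Since $\kappa(\pp)\mid K$ is purely inseparable of degree $4$, Corollary~\ref{2023_03_20_10:40} gives $\pp$ non-decomposed with $\deg(\pp) = 4$.

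To show $\overline{g} = 0$, the strategy is to exhibit $\overline{K}\cdot F$ explicitly as a rational function field. Setting $s := t^{1/4} \in \overline{K}$ and introducing $A := z + sy$, $B := y + s^2 z$, and $C := A^2 + B$, the characteristic-$2$ identity $(a+b)^2 = a^2 + b^2$ collapses the defining equation to $z = C^2$, and inverting the linear change of variables produces a single relation of Artin--Schreier type, $sA^2 + A = (1 + s^3) C^2 + sC$, presenting $\overline{K}\cdot F = \overline{K}(A, C)$ as a quadratic extension of $\overline{K}(C)$. Translating by $X := sA + (s^{1/2} + s^2) C$ (chosen so that $(s^{1/2}+s^2)^2 = s + s^4$ cancels the $C^2$ coefficient) reduces the relation to $X^2 + X = s^{1/2} C$, so that $C \in \overline{K}(X)$ and hence $\overline{K}\cdot F = \overline{K}(X)$ is rational. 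Then $\overline{g} = 0$, and Rosenlicht's formula~\eqref{2022_03_04_18:10} gives $\delta(\pp) = g - \overline{g} = 3$.

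For the Frobenius tower, rewriting the defining equation as $z = (z^2 + y)^2 + t(y^2 + z)^2$ exhibits $z \in F_1$ directly; iterating with the elementary inclusion $F_n^{\,2} \su F_{n+1}$ yields $z^{2^{n-1}} \in F_n$, whence $F_n = K(y^{2^n},\, z^{2^{n-1}})$ for every $n \geq 1$. At $\pp$ one has $v_\pp(y) = v_\pp(z) = -1$ and $z/y \equiv t^{1/4} \pmod{\pp}$; the elements $1/z \in F_1$, $1/z^2 \in F_2$, and $(y^4 + z^2)/z^4 + t^{-1} \in F_3$ have $v_\pp$-values $1, 2, 2$ respectively, and controlling the valuation groups $v_\pp(F_n^*)$ via the inclusions $F_{n+1} \su F_n$ extracts the degree sequence $\deg(\pp_n) = (4, 2, 2, 1, 1, \ldots)$ with $\kappa(\pp_1) = \kappa(\pp_2) = K(t^{1/2})$ and $\kappa(\pp_n) = K$ for $n \geq 3$. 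Finally, the partition constraint $\Delta_{n+1} \leq \Delta_n / 2$ from Proposition~\ref{2024_07_14_19:10} leaves only $3 = 3$ and $3 = 2 + 1$ as possible partitions of $\delta(\pp) = \Delta_0 + \Delta_1 + \cdots$; the former would make $\pp_1$ non-singular, and Proposition~\ref{2023_01_25_19:00} applied to $\kappa(\pp_1) = K(t^{1/2})$ of inseparability degree $2$ would then force $\pp_2$ rational, contradicting $\deg(\pp_2) = 2$. Hence $(\Delta_0, \Delta_1, \Delta_2, \ldots) = (2, 1, 0, \ldots)$, which gives the asserted $\delta$-invariants.

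The main obstacle is the Artin--Schreier reduction in the second paragraph: spotting the translation $X = sA + (s^{1/2} + s^2) C$ and verifying that the resulting equation linearizes in $C$, so that $\overline{K}\cdot F$ becomes rational. Once $\overline{g} = 0$ is pinned down, everything else amounts to bookkeeping along the Frobenius tower, where the key non-routine observation is the identity $z \in F_1$ that drives the explicit form of every $F_n$.
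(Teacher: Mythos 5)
Your overall architecture is genuinely different from the paper's: the paper introduces $u := z + y^2$, uses the relation $tu^2 + u = z^4$ to write down $F_1 = K(u,z)$, $F_2 = K(u,z^2)$, $F_3 = K(u)$ explicitly, identifies $\pp$ as the pole of $u$, and then runs the Bedoya--St\"ohr algorithm downward to compute $\de(\pp_2)$, $\de(\pp_1)$, and finally $\de(\pp)$. You instead locate $\pp$ via the Jacobian criterion, prove $\ov g = 0$ by an explicit Artin--Schreier linearization over $\ov K$, get $\de(\pp)=3$ from Rosenlicht's formula, and then try to reconstruct the finer data $(\De_0,\De_1,\dots)$ and the degree sequence from the partition constraint plus valuation-group bookkeeping. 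The Jacobian/Artin--Schreier part is correct (I checked the identities $A^4+B^2=z$, $W^2=z$, $sA^2+A=(1+s^3)W^2+sW$, and $X^2+X=s^{1/2}W$ all hold), and the identification $F_n = K(y^{2^n},z^{2^{n-1}})$ for $n\ge 1$ is right, driven by the useful observation $z = (z^2+y)^2 + t(y^2+z)^2 \in F_1$.

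The gap is in the determination of $\deg(\pp_2)$. From $1/z \in F_1$ with $v_\pp = 1$ you correctly get $e(\pp|\pp_1) = 1$, hence $\ka(\pp_1) = K(t^{1/2})$ and $\deg(\pp_1)=2$. From the element of $F_3$ with $v_\pp = 2$ you get $e(\pp|\pp_3) \mid 2$, and since $e(\pp|\pp_3)\,f(\pp|\pp_3)=[F:F_3]=8$ with $f(\pp|\pp_3)\le [\ka(\pp):K]=4$, you are forced to $e(\pp|\pp_3)=2$, $f=4$, $\ka(\pp_3)=K$, $\deg(\pp_3)=1$. But the element $1/z^2\in F_2$ with $v_\pp = 2$ only yields $e(\pp|\pp_2)\mid 2$; combined with $e(\pp|\pp_2)\,f(\pp|\pp_2)=[F:F_2]=4$ and $f\le 4$, \emph{both} $(e,f)=(1,4)$ (i.e.\ $\deg(\pp_2)=1$) and $(e,f)=(2,2)$ (i.e.\ $\deg(\pp_2)=2$) survive. ``Controlling the valuation groups'' with the elements you listed does not eliminate $\deg(\pp_2)=1$, and you use $\deg(\pp_2)=2$ crucially in the last paragraph to rule out the partition $(\De_0,\De_1)=(3,0)$; so the $\de$-invariants are not pinned down either. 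Both partitions $(3,0)$ and $(2,1)$ are consistent with everything you have established. What is missing is a lower bound on $\ka(\pp_2)$, for instance by producing an element of $F_2$ whose residue at $\pp$ lies outside $K$: the paper uses $z^2 u^{-1}\in F_2$ (where $u=z+y^2 = z^4 + t(y^4+z^2)\in F_2$), which satisfies $(z^2u^{-1})^2 + t = u^{-1}\in\mm_\pp$ and hence has residue $t^{1/2}$, giving $\ka(\pp_2)\supseteq K(t^{1/2})$, i.e.\ $\deg(\pp_2)=2$. Adding that single observation closes the gap, after which your partition argument and the rest of the bookkeeping go through.
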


In particular, the prime $\pp$ attains the bound in Theorem~\ref{2022_03_04_12:45}~\ref{2023_04_03_23:55} for $p=2$ and $\de(\pp)=3$.

\begin{proof}
By the genus-degree formula for plane curves, the curve $C$ has arithmetic genus $h^1(C,\OO_C) = 3$; equivalently, the function field $F|K$ has genus $g=3$.
Consider the function $u:= z + y^2$ in $F=K(y,z)$, and notice that it satisfies the relation
\begin{equation*}
    tu^2 + u = z^4.
\end{equation*}
The first three iterated Frobenius pullbacks of $F|K$ are then given by
\[ F_1|K = K(u,z)|K, \quad F_2|K = K(u,z^2)|K, \quad F_3|K = K(u)|K. \]
As the latter function field is rational, so is the extended function field $\ov K F|\ov K$, i.e., the extended curve $C \otimes_K \ov K$ is rational and its normalization has genus $\ov g = 0$.

Let $\pp$ denote the only pole of $u$, i.e., let $\pp$ be the only prime of $F|K$ such that $v_\pp(u)<0$.
It is non-decomposed because its restriction $\pp_3$ to $F_3|K$ is a rational prime with local parameter $u^{-1}\in F_3$.
In particular, we can determine its invariants by applying the algorithm in \cite{BedSt87}.
Since the function $(z^2 u^{-1})^2 + t = u^{-1}$ belongs to the maximal ideal of the local ring $\OO_{\pp_3}$, the value $(z^2 u^{-1})(\pp_2) = t^{1/2}$ lies outside $\ka(\pp_3)=K$.
Thus the prime $\pp_2$ of $F_2|K$ is unramified over $F_3$ with residue field $\ka({\pp_2}) = K(t^{1/2})$, and therefore $\de(\pp_2) = \frac12 v_{\pp_3} (d (z^2 u^{-1})^2 \big) = 0$ by \cite[ Theorem~2.3]{BedSt87}.
As the function $zu^{-1} \in F_1$ has fourth power $(zu^{-1})^4 = tu^{-2} + u^{-3}$, it follows that the prime $\pp_1$ of $F_1|K$ is ramified over $F_2$ with local parameter $zu^{-1}$, and thus $\de(\pp_1) = \frac12 v_{\pp_3} (d(zu^{-1})^4) = 1$ by \cite[ Theorem~2.3]{BedSt87}.

It remains to determine the invariants of $\pp$.
Note that $\de(\pp)=3$, since on the one hand $\de(\pp)\leq g=3$, while on the other hand $\De_0 \geq 2 \De_1=2$ (see Proposition~\ref{2024_07_14_19:10}).
Because $g - \ov g= 3$, it follows from Rosenlicht's genus drop formula~\eqref{2022_03_04_18:10} that $\pp$ is the only singular prime of $F|K$.
Moreover, as the function $(\frac zy)^8 + t^2 \in F_3$ has order $2$ at $\pp_3$, the value $(\frac zy)(\pp)=t^{1/4}$ does not belong to $\ka(\pp_1)=K(t^{1/2})$. 
This proves that $\pp$ has residue field $\ka(\pp)=K(t^{1/4})$ and degree $\deg(\pp)=4$.
\end{proof}

By a theorem of Lichtenbaum--Shafarevich,
a (relatively) minimal regular model of the fibration $S\to \PP^1$ 
exists and 
is unique up to isomorphism
(see \cite[Chapter~9, Theorem~3.21 and Corollary~3.24]{Liu02}).
In general, it is difficult to unveil the structure of such a minimal model, but here we can achieve an explicit description by performing blowups, as described below. 
Similar results for families of curves on rational normal scrolls were established in \cite{St04}.

Local computations show that the only singular point $P=((1:0:0),(0:1))$ of the surface $S$ is a rational double point of type $A_{15}$, which is resolved by blowing up the surface eight times.
This in turn gives a smooth surface $\wt S$ and a new proper flat 
fibration
\[ f:\wt S \tto S \overset\phi\tto \PP^1 \]
whose fibers over the points $(1:c)$ of $\PP^1$ coincide with those of $\phi$.
Over the point $(0:1)$ the fibre $f^*(0:1)$ is given by a linear combination of smooth rational curves
\begin{equation}\label{2021_08_24_04:55}
    \begin{aligned}
    f^*(0:1) &= 2E + E_1^{(1)} + E_2^{(1)} + 2 E_1^{(2)} + 2 E_2^{(2)} + 3 E_1^{(3)} + 3 E_2^{(3)} + 4 E_1^{(4)} + 4 E_2^{(4)}  \\
    &\qquad + 5 E_1^{(5)} + 5 E_2^{(5)} + 6 E_1^{(6)} + 6 E_2^{(6)} + 7 E_1^{(7)} + 7 E_2^{(7)} + 8 E^{(8)},
    \end{aligned}
\end{equation}
which intersect transversely according to the Coxeter-Dynkin diagram in Figure~\ref{2020_11_25_23:56}.
In this diagram the vertex $E$ represents the strict transform of the support $\phi^{-1}(0:1)$ of the bad fibre, while
the dashed line means that the strict transform $H$ of the horizontal curve $(1:0:0)\times \PP^1 \su S$ intersects the fibre $f^*(0:1)$ transversely at the component $E_2^{(1)}$ but does not belong to $f^*(0:1)$.
\begin{figure}[h]
\centering
\begin{tikzpicture}[line cap=round,line join=round,x=0.7cm,y=0.7cm]
\draw [line width=1.2pt] (-7.,0.)-- (7.,0.);
\draw [line width=1.2pt, dashed] (7.,0.)-- (8.,0.);
\draw [line width=1.2pt] (0.,0.)-- (0.,-1.);
\begin{scriptsize}
\draw [fill=black] (-7,0) circle (2pt);
\draw[color=black] (-7,0.5) node {$E^{(1)}_1$};    
\draw [fill=black] (-6,0) circle (2pt);
\draw[color=black] (-6,0.5) node {$E^{(2)}_1$};
\draw [fill=black] (-5,0) circle (2pt);
\draw[color=black] (-5,0.5) node {$E^{(3)}_1$};
\draw [fill=black] (-4,0) circle (2pt);
\draw[color=black] (-4,0.5) node {$E^{(4)}_1$};
\draw [fill=black] (-3,0) circle (2pt);
\draw[color=black] (-3,0.5) node {$E^{(5)}_1$};
\draw [fill=black] (-2.,0.) circle (2pt);
\draw[color=black] (-2.,.5) node {$E^{(6)}_1$};
\draw [fill=black] (-1.,0.) circle (2pt);
\draw[color=black] (-1.,.5) node {$E^{(7)}_1$};
\draw [fill=black] (0.,0.) circle (2pt);
\draw[color=black] (0.,.5) node {$E^{(8)}$};
\draw [fill=black] (0.,-1.) circle (2pt);
\draw[color=black] (0.,-1.5) node {$E$};
\draw [fill=black] (1.,0.) circle (2pt);
\draw[color=black] (1.,0.5) node {$E^{(7)}_2$};
\draw [fill=black] (2.,0.) circle (2pt);
\draw[color=black] (2.,.5) node {$E^{(6)}_2$};
\draw [fill=black] (3.,0.) circle (2pt);
\draw[color=black] (3.,0.5) node {$E^{(5)}_2$};
\draw [fill=black] (4.,0.) circle (2pt);
\draw[color=black] (4,.5) node {$E^{(4)}_2$};
\draw [fill=black] (5.,0.) circle (2pt);
\draw[color=black] (5.,0.5) node {$E^{(3)}_2$};
\draw [fill=black] (6,0) circle (2pt);
\draw[color=black] (6,0.5) node {$E^{(2)}_2$};
\draw [fill=black] (7,0) circle (2pt);
\draw[color=black] (7,0.5) node {$E^{(1)}_2$};
\draw [fill=black] (8,0) circle (2pt);
\draw[color=black] (8,0.5) node {$H$};
\end{scriptsize}
\end{tikzpicture}
\caption{Dual diagram of the fibre $f^*(0:1)$}\label{2020_11_25_23:56}
\end{figure}
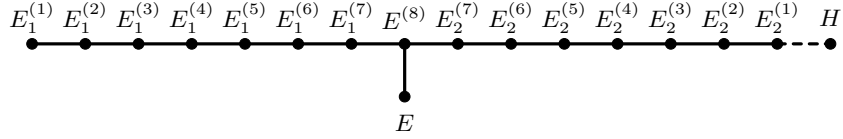

\noindent
Since a fibre meets its components with intersection number zero, equation~\eqref{2021_08_24_04:55} allows us to compute the self-intersection number of each component of $f^*(0:1)$. Thus
\[ E\cdot E = -4, \qquad E^{(i)}_j \cdot E^{(i)}_j = -2 \:\text{ for each $i,j$}, \quad E^{(8)} \cdot E^{(8)} = -2. \]
In particular, by Castelnuovo's contractibility criterion the smooth projective surface $\wt S$ is relatively minimal
over $\PP^1$,
and hence the fibration $\wt S \to \PP^1$ is the minimal regular model of the original fibration $S \to \PP^1$.

However, as we will see in a moment, the surface $\wt S$ is not relatively minimal as an algebraic surface over $\Spec(k)$.
In other words, it contains at least one smooth rational curve of self-intersection $-1$.
To see this in detail, we note that the first projection
\[ S \tto \PP^2 \]
is a birational morphism whose inverse
\[ \PP^2 \ttto S, \quad 
(x:y:z)\mapsto \big( (x:y:z),(y^4 + x^2 z^2 : z^4 + x^2 y^2 + x^3 z)\big) \]
is regular at all points of $\PP^2$ except $(1:0:0)$.
More precisely, the map $S \to \PP^2$ contracts the horizontal curve $(1:0:0)\times \PP^1 \su S$ to the point $(1:0:0)$ and induces an isomorphism between $S \setminus ((1:0:0)\times \PP^1) $ and $\PP^2 \setminus \{(1:0:0)\}$.
The composition $\wt S \to S \to \PP^2$ is then a birational morphism between smooth projective surfaces, which factors as the composition of finitely many blowups centered at smooth points. Its exceptional fibre, i.e., the fibre over $(1:0:0)$, comprises the sixteen smooth rational curves $E_1^{(1)},\dots , E_2^{(1)},H$, where $H$ is the strict transform under $\wt S \to S$ of the horizontal curve $(1:0:0)\times \PP^1\su S$  (see Figure~\ref{2020_11_25_23:56}).
This shows that the smooth projective surface $\wt S$ is rational, and that the smooth rational curve $H$ is contractible, i.e., it has self-intersection $-1$.

We collect our results on the geometry of the surface $\wt S$ in the following theorem.

\begin{thm}\label{2022_02_11_12:20}        
    The fibration $f:\wt S \to \PP^1$ is the minimal 
    regular
    model of the fibration $\phi: S\to \PP^1$.
    Its fibres over the points $(1:c)$ coincide with the corresponding fibres of $\phi$, while its fibre over the point $(0:1)$ is a linear combination of smooth rational curves as in \eqref{2021_08_24_04:55}, which intersect transversely according to the diagram in Figure~\ref{2020_11_25_23:56}.
    
    The smooth projective surface $\wt S$ is rational and the strict transform $H \su \wt S$ of the curve $(1:0:0)\times \PP^1\su S$ is a horizontal smooth rational curve of self-intersection $-1$.
    If we blow down successively the curves $H$, $E_2^{(1)}$, $E_2^{(2)}$, $\dots$, $E_1^{(2)}$ and $E_1^{(1)}$, then we obtain a surface isomorphic to the projective plane.
    
    The non-smooth locus of the fibration $f:\wt S \to \PP^1$ is a smooth rational horizontal curve, which is purely inseparable of degree $4$ over the base $\PP^1$.    
\end{thm}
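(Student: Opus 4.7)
The first step is a local analysis of $S$ at its unique singular point $P = ((1{:}0{:}0),(0{:}1))$. In the affine chart $X \neq 0$, $T_1 \neq 0$ with coordinates $y$, $z$, $s = T_0/T_1$, the surface is cut out by $s(z^4 + y^2 + z) + y^4 + z^2 = 0$, and $P$ is the origin. I would perform eight successive blowups to resolve this singularity and verify that it is a rational double point of type $A_{15}$: the exceptional locus consists of fifteen smooth rational curves of self-intersection $-2$ arranged in a chain $E_1^{(1)}, E_1^{(2)}, \dots, E_1^{(7)}, E^{(8)}, E_2^{(7)}, \dots, E_2^{(1)}$. In parallel I would track the strict transform of the support $y^2 + z = 0$ of the bad fibre (recall that $y^4 + z^2 = (y^2+z)^2$ in characteristic $2$), and check that it becomes the smooth rational curve $E$ which meets the chain transversely only at $E^{(8)}$, and meets $H$ nowhere.

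Next, I would determine the expression \eqref{2021_08_24_04:55} for $f^*(0{:}1)$ and the self-intersection numbers of all components. Since $\phi^{-1}(0{:}1) = 2 C_0$, where $C_0$ is the conic $Y^2 + XZ = 0$, the strict transform $E$ of $C_0$ appears with multiplicity $2$. The multiplicities of the exceptional components are then uniquely determined by the linear system of equations $f^*(0{:}1) \cdot E^{(i)}_j = 0$ and $f^*(0{:}1) \cdot E^{(8)} = 0$, once one knows the transverse intersections displayed in Figure~\ref{2020_11_25_23:56}; this yields the coefficients $1,2,3,\dots,7,8,7,\dots,2,1$ along the chain. Applying $f^*(0{:}1) \cdot E = 0$ together with $E \cdot E^{(8)} = 1$ and $E$ disjoint from the other exceptional curves gives $E \cdot E = -4$, and the self-intersection of each exceptional component is $-2$ by the $A_{15}$ structure. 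Since no component of $f^*(0{:}1)$ has self-intersection $-1$ and every other fibre is irreducible and reduced, Castelnuovo's criterion implies that $f$ is relatively minimal, and hence is the minimal regular model of $\phi$.

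For the rationality and blowdown assertion, I would exploit the birational morphism $\pi : S \to \PP^2$ from the first projection, which is an isomorphism away from the horizontal section $H_0 = (1{:}0{:}0) \times \PP^1$ contracted to $(1{:}0{:}0)$. The composition $\wt S \to S \to \PP^2$ is then a birational morphism of smooth projective surfaces whose exceptional fibre over $(1{:}0{:}0)$ must be a union of smooth rational curves, namely $H$ (the strict transform of $H_0$) together with the fifteen curves $E^{(i)}_j$ and $E^{(8)}$; the curve $E$ is excluded because it maps dominantly onto $C_0 \subset \PP^2$. Since a birational morphism of smooth projective surfaces factors as a sequence of contractions of $-1$-curves, the order of contractions is forced: starting with $H$ (whose self-intersection is $-1$ by $H \cdot f^*(0{:}1) = 1$ and the fact that $H$ meets only $E_2^{(1)}$), and then proceeding inward along the chain as prescribed in the theorem, one reaches $\PP^2$ after sixteen blowdowns.

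Finally, the non-smooth locus of $\phi$ has already been identified in the discussion preceding the theorem as the curve in $\PP^2 \times \PP^1$ cut out by $X = 0$ and $T_0 Z^4 + T_1 Y^4 = 0$; this curve admits the parametrization $(y{:}z) \mapsto ((0{:}y{:}z), (z^4 {:} y^4))$ and hence is a smooth rational curve whose projection onto the base is a purely inseparable cover of degree $4$. Since this curve meets the bad fibre only at $((0{:}0{:}1),(0{:}1))$, which is different from the singular point $P$ of $S$, and $\wt S \to S$ is an isomorphism away from $P$, it lifts isomorphically to a smooth rational horizontal curve in $\wt S$ with the same properties. I expect that the principal technical obstacle will be the bookkeeping in the $A_{15}$ resolution, in particular verifying that after each of the eight blowups the strict transform of the bad fibre remains smooth and meets only the predicted exceptional component.
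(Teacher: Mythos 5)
Your proposal follows essentially the same route as the paper: resolve the $A_{15}$ rational double point by eight blowups, determine the fibre $f^*(0{:}1)$ and the self-intersections via $f^*(0{:}1)\cdot E' = 0$, conclude relative minimality from Castelnuovo's criterion, and use the birational first projection $\wt S\to S\to\PP^2$ to deduce rationality and to exhibit $H,E^{(1)}_2,\dots,E^{(1)}_1$ as the exceptional configuration over $(1{:}0{:}0)$; the treatment of the non-smooth locus is likewise identical.

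One small slip: the parenthetical claim that $H\cdot H=-1$ follows from $H\cdot f^*(0{:}1)=1$ together with $H$ meeting only $E^{(1)}_2$ is not a valid deduction --- being a section meeting one component of multiplicity one constrains $H\cdot f^*(0{:}1)$, not $H\cdot H$. The justification you actually need is the one you state right afterwards: the morphism $\wt S\to\PP^2$ factors as sixteen blowdowns of the sixteen exceptional curves $H,E^{(i)}_j,E^{(8)}$, and since the fifteen curves coming from the $A_{15}$ resolution all have self-intersection $-2$, the only possible $(-1)$-curve to be contracted first is $H$; this is precisely how the paper obtains $H\cdot H=-1$.
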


By the variant of the Faltings-Mordell theorem for function fields \cite{Sam66,Vol91}, the generic fibre $C|K$ has only finitely many $K$-rational points. This means that the fibration $\wt S \to \PP^1$ has only finitely many horizontal prime divisors of degree 1 over the base $\PP^1$.

\begin{prop}
The generic fibre $C|K$ has only one $K$-rational point. The corresponding horizontal prime divisor of degree 1 is the contractible curve $H$.
\end{prop}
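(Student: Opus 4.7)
The plan is to exhibit one $K$-rational point and then to rule out any other using the $\overline K$-parametrization of the normalization of $C \otimes_K \overline K$. Substituting $y = z = 0$ into \eqref{2022_03_04_18:15} gives a $K$-rational point of $C$; as a horizontal divisor of $S$ this is the curve $(1:0:0) \times \PP^1$, whose strict transform in $\wt S$ is $H$ by the construction preceding Theorem~\ref{2022_02_11_12:20}.

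For uniqueness, starting from $u = z + y^2$ with $tu^2 + u = z^4$ as in the proof of Proposition~\ref{2023_01_30_21:00}, I set $u = v^8$ and take the unique fourth and square roots in characteristic~$2$ to obtain the parametrization
\[
y = v + t^{1/8} v^2 + v^4, \qquad z = v^2 + t^{1/4} v^4
\]
of the normalization $\PP^1_{\overline K} \to C \otimes_K \overline K$. Since the only singular prime has degree~$4$, any $K$-rational point corresponds to a finite $v_0 \in \overline K$ with $y(v_0), z(v_0) \in K$, and the identity $v_0^8 = y(v_0)^2 + z(v_0) \in K$ already forces $v_0 \in K^{1/8} = k(t^{1/8})$. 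Writing $v_0 = \sum_{i=0}^{7} \alpha_i s^i$ in the $K$-basis $\{1, s, \dots, s^7\}$ of $L = k(s)$ with $s = t^{1/8}$ and $\alpha_i \in K$, the vanishing of the $s, s^2, \dots, s^7$-coefficients of $y(v_0)$ and $z(v_0)$ should, after routine elimination, yield $\alpha_2 = \alpha_3 = \alpha_4 = \alpha_5 = \alpha_6 = 0$ and reduce everything to the single equation
\[
\alpha_0^8 \;=\; \alpha_7 + t\alpha_7^2 + t^4 \alpha_7^8
\]
in $K = k(t)$.

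The crucial and most delicate step is to show that this equation forces $\alpha_7 = 0$. I intend to argue by a valuation analysis, exploiting that the left-hand side lies in $K^8 = k(t^8)$ and hence, since $t^8 - \tau^8 = (t-\tau)^8$ in characteristic~$2$, has Laurent expansion at every closed point of $\PP^1$ supported on multiples of~$8$. If $\alpha_7$ had a pole of order $n \geq 1$ at some $\tau \in k \setminus \{0\}$, then $t^4\alpha_7^8 = \tau^4\alpha_7^8 + (t-\tau)^4\alpha_7^8$ would contribute a nonzero coefficient $c_{-n}^8$ at $(t-\tau)^{-8n+4}$ which the contributions of $\alpha_7$ and $t\alpha_7^2$ at that order are too small to cancel (since $-8n+4$ is strictly less than both $-n$ and $-2n$); analogous $\bmod\,8$ obstructions at $t = 0$ and at $t = \infty$ force $\alpha_7$ to be a constant in $k$, and the $t$-coefficient of $\alpha_7 + t\alpha_7^2 + t^4\alpha_7^8$ then rules out nonzero constants. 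Consequently $\alpha_7 = 0$, so $\alpha_0 = 0$ and $v_0 = 0$, leaving $(y, z) = (0, 0)$ as the unique $K$-rational point of $C$. The delicate bookkeeping of congruences modulo~$8$ in the Laurent expansions is the main technical challenge.
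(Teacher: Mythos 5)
Your proof takes a genuinely different route from the paper's, although both rest on the same underlying relation. The paper argues directly from $z^4 = u + tu^2$: for a hypothetical second $K$-rational point $\fq'$ it checks that $u(\fq'), z(\fq') \in K^\times$, writes them as $f/g$ and $F/G$ in lowest terms, clears denominators to the polynomial identity $F^4 g^2 = G^4 f(g+tf)$ in $k[t]$, and derives $t \in K^2$ by a short unique-factorization analysis. You instead parametrize the normalization of $C\otimes_K \overline K$, expand the preimage $v_0 \in K^{1/8}$ of a rational point in the $p$-basis $1, s, \dots, s^7$ of $K^{1/8}|K$ with $s = t^{1/8}$, impose $y(v_0), z(v_0) \in K$, and arrive at $\alpha_0^8 = \alpha_7 + t\alpha_7^2 + t^4\alpha_7^8$, which you then kill by Laurent-expansion congruences modulo~$8$ at each place of $\PP^1_k$. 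Note that your displayed equation is (after the substitution $\alpha_7 = u(\fq')$, $\alpha_0^2 = z(\fq') + t\,u(\fq')^2$) exactly $z(\fq')^4 = u(\fq') + t\,u(\fq')^2$ in disguise, so the two proofs differ only in how they dispose of this final relation: the paper by polynomial divisibility, you by a valuation-theoretic $\bmod\,8$ obstruction. The paper's version is markedly shorter; the delicate bookkeeping you flag is essentially the same work done more indirectly. Your valuation argument itself is sound: a pole of $\alpha_7$ of order $n\geq 1$ at a finite $\tau$ produces an uncancellable term at $(t-\tau)^{-8n+4}$ because the left side is supported on multiples of $8$ while $\alpha_7$ and $t\alpha_7^2$ cannot reach order $-8n+4$; then the degree congruence at $\infty$ forces $\alpha_7$ constant, and the $t$-coefficient forces it to vanish, whence $\alpha_0 = 0$ and $v_0 = 0$.

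There is, however, a concrete error in your elimination step: imposing $y(v_0), z(v_0) \in K$ does \emph{not} give $\alpha_3 = \alpha_4 = 0$. Working out the vanishing of the $s^1,\dots,s^7$-coefficients of $y(v_0)$ (and the $s^2,s^4,s^6$-coefficients of $z(v_0)$, which turn out to be redundant) yields $\alpha_2 = \alpha_5 = \alpha_6 = 0$, $\alpha_1 = \alpha_0^2 + t\alpha_7^2$, $\alpha_3 = \alpha_1^2$, and, by combining the $s^4$- and $s^7$-coefficient relations, $\alpha_4 = \alpha_7$; these are all generically nonzero. The single residual equation $\alpha_0^8 = \alpha_7 + t\alpha_7^2 + t^4\alpha_7^8$ that you wrote is nonetheless correct, so your conclusion stands, but the stated intermediate simplification is false and should be replaced by the relations above before you rely on them.
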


\begin{proof}
Let $\fq$ be the $K$-rational point of $C$ corresponding to the horizontal curve $H\su \wt S$, i.e., let $\fq$ be the only prime of $F|K$ such that the rational functions $z,y \in K(C)$ in \eqref{2022_03_04_18:15} satisfy $z(\fq) = y (\fq) = 0$. Clearly, the function $u:= z + y^2 \in F$ introduced in the proof of Proposition~\ref{2023_01_30_21:00} also satisfies $u(\fq) = 0$.

Seeking a contradiction, we assume that there is another rational prime $\fq'\neq \fq$. As follows from $z^4 = u + t u^2$ and $y^2 = z + u$, the value $u(\fq')\in K$ is non-zero, and so is the value $z(\fq') \in K$ because otherwise the equality $y(\fq')^2 + t u(\fq')^2 = z(\fq')^4 = 0$ contradicts $t\notin K^2$.
Thus there exist non-zero polynomials $f,g,F,G \in k[t]$ with $(f,g)=(F,G)=1$ satisfying the identity $(\frac FG)^4 = \frac fg + t (\frac fg)^2$, i.e.,
\[ F^4 g^2 = G^4 f (g + tf). \]
Since $G^4$ divides $g^2$, the polynomial $f$ is coprime with $G$ and therefore it is a fourth power in $k[t]$.
It follows that $g = G^2 g'$, $f= f'^4$ and $F=f' F'$ for some polynomials $f',g',F'$ in $k[t]$.
From the relation $F'^4 g'^2 = G^2 g' + t f'^4$ we deduce that $g'$ divides $t$, i.e., $g'$ is either a constant or a constant times $t$.
In light of $k=k^2$, both possibilities yield the contradiction $t\in K^2$.
\end{proof}

\begin{bibdiv}
\begin{biblist}
\bib{BedSt87}{article}{
  author={Bedoya, Hernando},
  author={St\"ohr, Karl-Otto},
  title={An algorithm to calculate discrete invariants of singular primes in function fields},
  journal={J. Number Theory},
  volume={27},
  date={1987},
  number={3},
  pages={310--323},
}

\bib{BM76}{article}{
  author={Bombieri, Enrico},
  author={Mumford, David},
  title={Enriques' classification of surfaces in char. $p$. III},
  journal={Invent. Math.},
  volume={35},
  date={1976},
  pages={197--232},
}

\bib{BL22}{article}{
  author={Bragg, Daniel},
  author={Lieblich, Max},
  title={Perfect points on curves of genus 1 and consequences for supersingular K3 surfaces},
  journal={Compos. Math.},
  volume={158},
  date={2022},
  number={5},
  pages={1052--1083},
}

\bib{FaSc20}{article}{
  author={Fanelli, Andrea},
  author={Schr\"oer, Stefan},
  title={Del Pezzo surfaces and Mori fiber spaces in positive characteristic},
  journal={Trans. Amer. Math. Soc.},
  volume={373},
  date={2020},
  number={3},
  pages={1775--1843},
}

\bib{EGA}{article}{
  author={Grothendieck, Alexander},
  title={\'El\'ements de g\'eom\'etrie alg\'ebrique (r\'edig\'es avec la collaboration de Jean Dieudonn\'e)},
  journal={Inst. Hautes Études Sci. Publ. Math.},
  date={{1960--1967}},
  number={4, 8, 11, 17, 20, 24, 28, 32},
  label={EGA},
}

\bib{Har77}{book}{
  author={Hartshorne, Robin},
  title={Algebraic geometry},
  series={Graduate Texts in Mathematics},
  volume={52},
  publisher={Springer-Verlag, New York-Heidelberg},
  date={1977},
  pages={xvi+496 pp.},
}

\bib{Hi24}{article}{
  author={Hilario, Cesar},
  title={Turning non-smooth points into rational points},
  date={2024},
  note={Preprint at \href {https://arxiv.org/abs/2402.14969}{\textsf {arXiv:2402.14969}}},
}

\bib{HiSc23}{article}{
  author={Hilario, Cesar},
  author={Schr\"oer, Stefan},
  title={Generalizations of quasielliptic curves},
  journal={\'{E}pijournal G\'{e}om. Alg\'{e}brique},
  volume={7},
  date={2023},
  pages={Art. 23, 31 pp},
}

\bib{HiSt23}{article}{
  author={Hilario, Cesar},
  author={St\"ohr, Karl-Otto},
  title={Fibrations by plane quartic curves with a canonical moving singularity},
  date={2023},
  note={Preprint at \href {https://arxiv.org/abs/2306.08579}{\textsf {arXiv:2306.08579}}},
}

\bib{HiSt24}{article}{
  author={Hilario, Cesar},
  author={St\"ohr, Karl-Otto},
  title={Fibrations by plane projective rational quartic curves in characteristic two},
  date={2024},
  note={Preprint},
}

\bib{IIL20}{article}{
  author={Ito, Kazuhiro},
  author={Ito, Tetsushi},
  author={Liedtke, Christian},
  title={Deformations of rational curves in positive characteristic},
  journal={J. Reine Angew. Math.},
  volume={769},
  date={2020},
  pages={55--86},
}

\bib{JW21}{article}{
  author={Ji, Lena},
  author={Waldron, Joe},
  title={Structure of geometrically non-reduced varieties},
  journal={Trans. Amer. Math. Soc.},
  volume={374},
  date={2021},
  number={12},
  pages={8333--8363},
}

\bib{Ko91}{article}{
  author={Koll\'ar, J\'anos},
  title={Extremal rays on smooth threefolds},
  journal={Ann. Sci. \'Ecole Norm. Sup. (4)},
  volume={24},
  date={1991},
  number={3},
  pages={339--361},
}

\bib{Lan79}{article}{
  author={Lang, William E.},
  title={Quasi-elliptic surfaces in characteristic three},
  journal={Ann. Sci. \'Ecole Norm. Sup. (4)},
  volume={12},
  date={1979},
  number={4},
  pages={473--500},
}

\bib{Liu02}{book}{
  author={Liu, Qing},
  title={Algebraic geometry and arithmetic curves},
  series={Oxford Graduate Texts in Mathematics},
  volume={6},
  publisher={Oxford University Press, Oxford},
  date={2002},
  pages={xvi+576 pp.},
}

\bib{Ma16}{article}{
  author={Maddock, Zachary},
  title={Regular del Pezzo surfaces with irregularity},
  journal={J. Algebraic Geom.},
  volume={25},
  date={2016},
  number={3},
  pages={401--429},
}

\bib{Mat80}{book}{
  author={Matsumura, Hideyuki},
  title={Commutative algebra},
  series={Mathematics Lecture Note Series},
  volume={56},
  edition={2},
  publisher={Benjamin/Cummings Publishing Co., Inc., Reading, MA},
  date={1980},
  pages={xv+313},
}

\bib{PW22}{article}{
  author={Patakfalvi, Zsolt},
  author={Waldron, Joe},
  title={Singularities of general fibers and the LMMP},
  journal={Amer. J. Math.},
  volume={144},
  date={2022},
  number={2},
  pages={505--540},
}

\bib{Queen71}{article}{
  author={Queen, Clifford S.},
  title={Non-conservative function fields of genus one. I},
  journal={Arch. Math. (Basel)},
  volume={22},
  date={1971},
  pages={612--623},
}

\bib{Ros52}{article}{
  author={Rosenlicht, Maxwell},
  title={Equivalence relations on algebraic curves},
  journal={Ann. of Math. (2)},
  volume={56},
  date={1952},
  pages={169--191},
}

\bib{Ros21}{article}{
  author={R\"{o}ssler, Damian},
  title={Purely inseparable points on curves},
  conference={ title={Abelian varieties and number theory}, },
  book={ series={Contemp. Math.}, volume={767}, publisher={Amer. Math. Soc., Providence, RI}, },
  date={2021},
  pages={89--96},
}

\bib{Sal11}{article}{
  author={Salom\~ao, Rodrigo},
  title={Fibrations by nonsmooth genus three curves in characteristic three},
  journal={J. Pure Appl. Algebra},
  volume={215},
  date={2011},
  number={8},
  pages={1967--1979},
}

\bib{Sal14}{article}{
  author={Salom\~ao, Rodrigo},
  title={Fibrations by curves with more than one nonsmooth point},
  journal={Bull. Braz. Math. Soc. (N.S.)},
  volume={45},
  date={2014},
  number={2},
  pages={267--292},
}

\bib{Sam66}{book}{
  author={Samuel, Pierre},
  title={Lectures on old and new results on algebraic curves},
  series={Tata Institute of Fundamental Research Lectures on Mathematics, No. 36},
  note={Notes by S. Anantharaman},
  publisher={Tata Institute of Fundamental Research, Bombay},
  date={1966},
  pages={ii+127+iii pp.},
}

\bib{Sc08}{article}{
  author={Schr\"oer, Stefan},
  title={Singularities appearing on generic fibers of morphisms between smooth schemes},
  journal={Michigan Math. J.},
  volume={56},
  date={2008},
  number={1},
  pages={55--76},
}

\bib{Sc09}{article}{
  author={Schr\"oer, Stefan},
  title={On genus change in algebraic curves over imperfect fields},
  journal={Proc. Amer. Math. Soc.},
  volume={137},
  date={2009},
  number={4},
  pages={1239-1243},
}

\bib{SiSt16}{article}{
  author={Simarra Ca{\~n}ate, Alejandro},
  author={St\"ohr, Karl-Otto},
  title={Fibrations by non-smooth projective curves of arithmetic genus two in characteristic two},
  journal={J. Pure Appl. Algebra},
  volume={220},
  date={2016},
  number={9},
  pages={3282--3299},
}

\bib{Sti78}{article}{
  author={Stichtenoth, Henning},
  title={Zur Konservativit\"at algebraischer Funktionenk\"orper},
  journal={J. Reine Angew. Math.},
  volume={301},
  date={1978},
  pages={30--45},
}

\bib{St88}{article}{
  author={St\"ohr, Karl-Otto},
  title={On singular primes in function fields},
  journal={Arch. Math. (Basel)},
  volume={50},
  date={1988},
  number={2},
  pages={156--163},
}

\bib{St04}{article}{
  author={St\"ohr, Karl-Otto},
  title={On Bertini's theorem in characteristic $p$ for families of canonical curves in $\PP ^{(p-3)/2}$},
  journal={Proc. London Math. Soc. (3)},
  volume={89},
  date={2004},
  number={2},
  pages={291--316},
}

\bib{Tate52}{article}{
  author={Tate, John},
  title={Genus change in inseparable extensions of function fields},
  journal={Proc. Amer. Math. Soc.},
  volume={3},
  date={1952},
  pages={400--406},
}

\bib{Vol91}{article}{
  author={Voloch, Jos\'e Felipe},
  title={A Diophantine problem on algebraic curves over function fields of positive characteristic},
  journal={Bull. Soc. Math. France},
  volume={119},
  date={1991},
  number={1},
  pages={121--126},
}

\bib{Zar44}{article}{
  author={Zariski, Oscar},
  title={The theorem of Bertini on the variable singular points of a linear system of varieties},
  journal={Trans. Amer. Math. Soc.},
  volume={56},
  date={1944},
  pages={130--140},
}
\end{biblist}
\end{bibdiv}

\end{document}